\DeclareMathAlphabet{\mymathbb}{U}{bbold}{m}{n}
\newtheorem{thm}{Theorem}
\theoremstyle{definition}
\newtheorem{lem}{Lemma}
\newtheorem*{defn}{Definition}
\newtheorem{cor}{Corollary}
\newtheorem{fact}{Fact}
\newtheorem*{algorithm}{Algorithm}
\newtheorem*{rem}{Remark}
\newtheorem*{ques*}{Question}
\newtheorem*{prob*}{Problem}
\newcommand{\cC}{\mathcal{C}}
\newcommand{\sP}{\mathcal{P}}
\newcommand{\sN}{\mathcal{N}}
\newcommand{\bs}{\boldsymbol{\Sigma}}
\newcommand{\bp}{\boldsymbol{\Pi}}
\newcommand{\bP}{\boldsymbol{\Pi}}
\newcommand{\bS}{\boldsymbol{\Sigma}}
\newcommand{\res}{\restriction}
 \newcommand{\ww}{\omega^\omega}
 \newcommand{\sm}{\setminus}
\newcommand{\R}{\mathbb{R}}
\newcommand{\N}{\mathbb{N}}
\begin{document}

\title[The descriptive complexity of the set of Poisson generic numbers]{The descriptive complexity\\of the set of Poisson generic numbers}
\author[V. Becher \and S. Jackson \and D. Kwietniak \and W. Mance]{Ver\'onica Becher  
\and 
Stephen Jackson
\and 
Dominik Kwietniak \and Bill Mance}

\maketitle

%{\small \tableofcontents}

\begin{abstract}
  Let $b\ge 2$ be an integer. We show that the set of real numbers that are Poisson generic in base $b$ is $\bP^0_3$-complete in the Borel hierarchy of
  subsets of the real line.  Furthermore,  the set of real numbers that are Borel normal in base $b$   and not Poisson generic in base $b$ is complete for the class given by the differences between $\bP^0_3$ sets.  We also show that the effective  versions of these results hold in the effective Borel hierarchy.
  \end{abstract}

\noindent
{\bf Keywords}: Poisson generic numbers;  normal numbers; descriptive set theory;
\medskip
\medskip

\noindent
{\bf MSC Classification}: 03E15; 11U99; 11K16.
\medskip

% 	03E15  	Descriptive set theory
% 	11U99  Connectuoon between logic and number thery - 	None of the above, but in this section
% 11K16  	Normal numbers, radix expansions, Pisot numbers, Salem numbers, good lattice points, etc. [See also 11A63]

%\note{Along all the paper, we need to write real NUMBERS instead of sequences of symbols.  --V.}

\section{Introduction and statement of results}

Years ago Zeev Rudnick introduced Poisson generic real numbers:
a real number $x$ is Poisson generic in an integer base~$b\ge 2$,
if the counts of number of occurrences of words of length $k$ over the alphabet $\{0,1,\ldots,b-1\}$ appearing in the initial segments of the base $b$ expansion of $x$ tends to the Poisson distribution  with parameter $\lambda$ as $k\to\infty$ for every $\lambda>0$. That is, we look at the fraction of $k$ words appearing a given number of times among the first digits tends in distribution to the Poisson distribution with parameter $\lambda$ as $k\to \infty$. Peres and Weiss~\cite{weiss2020} proved that Lebesgue almost all real numbers are Poisson generic.  Their proof is presented in~\cite[Theorem 1]{ABM}. Poisson genericity implies (Borel) normality.

For the rest of the paper, given an integer $b\ge 2$,  
we identify real numbers in the unit interval $[0,1)$ with their base $b$ expansions, 
that is, we identify each $x\in [0,1)$ with a sequence $x_1x_2x_3\ldots$ with values in $\{0,1,\ldots,b-1\}$ such that
\[
x=\sum_{j=1}^\infty\frac{x_j}{b^j}
\]
and $x_j\neq 0$ for infinitely many $j\ge 1$. All real numbers in $[0,1)$ have at least one, and for all, but countably many real numbers the base $b$ expansion is unique.

In the sequel we  consider an integer $b\geq 2$ that we take as the given base. For a real number $x$ and an interval $A=[q,r]$ of real numbers (respectively $A=[q,r)$), where $1\le q<r$ we write $x\res A$ to denote the segment of the base-$b$ expansion of $x$ corresponding to positive integers in the interval $A$.   Many times instead of  writing $x\res[1,r]$ for some $r>1$ we write 
we write  $x\res r$ 
to denote  the initial segment of the base $b$ expansion of $x$  up to position $\lfloor r\rfloor$. 

Since Poisson genericity in base $b$ is a property that depends only of the tail of the base~$b$ representation of that real  number, the integer part of the number is irrelevant. Thus, we present our results  just for the real numbers in the unit interval, but they also hold when the unit interval is replaced by the real line.

\begin{defn}[Poisson generic number]
Let $\lambda$ be positive  real number. A real number  $x\in[0,1)$  is $\lambda$-Poisson generic in base $b$ if for every non-negative integer $j$ 
we have 
\[
\lim_{k\to \infty} Z^\lambda_{j,k} (x)= e^{-\lambda} \frac{\lambda^j}{j!},
\]
where \[ Z^\lambda_{j,k} (x) = \frac{1}{b^k} | \{  w \in \{0, \ldots (b-1)\}^k\colon w \text{ occurs } j
\text{ times in } x\res \lambda b^k+k\}|. \]
A real number $x$ is Poisson generic in base $b$ if it is $\lambda$-Poisson generic in base $b$
 for every positive real~$\lambda$.
\end{defn}

Let ${\mathcal P}_b$ be the set of real numbers that are Poisson generic  in base $b$.
It is easy to see that ${\mathcal P}_b$ is a Borel set. Our goal is to 
%show the following result concerning 
give the descriptive complexity of 
${\mathcal P}_b$. In other words, we would like to locate the exact position of ${\mathcal P}_b$ in the Borel hierarchy (both, lightface and boldface).

Recall that the Borel hierarchy for subsets of the real numbers is the stratification of the $\sigma$-algebra generated by the open sets with the usual topology.
 For references see Kechris's textbook~\cite{Kechris}. 

% The Borel hierarchy for subsets of 
% $ b^\omega$  is the stratification of the
% $\sigma$-algebra generated by the open sets with the usual  topology given by  the family of basic open sets $O_s= \{x\in b^\omega: x\res |s|= s\}$, for every finite sequence $s$ of digits in $\Z/b\Z$.

A set $A$ is $\bS^0_1$ if and only if $A$  is open and $A$ is 
$\bP^0_1$ if and only if $A$ is closed.  
$A$ is $\bS^0_{n+1}$ if and only if it is a
countable union of $\bP^0_{n}$ sets, and  $A$  is $\bP^0_{n+1}$ if and only if it is a countable
intersection of $\bS^0_n$ sets. 
 
A set $A$ is hard for a Borel class if  and only if every set in the class is reducible to $A$ by a continuous map.
A set $A$ is complete in a class if it is hard for this class and belongs to the class.
By Wadge's celebrated theorem, in spaces  like 
the real numbers with the usual interval topology, 
a $\bS^0_n$ set is $\bS^0_n$-complete if
  and only if it is not $\bP^0_n$.

When we restrict to
intervals with rational endpoints and 
computable countable unions and intersections, we obtain 
the  effective or lightface Borel hierarchy. 
One way to present the finite levels of the effective  Borel hierarchy
is by means of the arithmetical hierarchy of formulas in the language of second-order arithmetic. 
Atomic formulas in this language assert  algebraic identities between integers 
or membership of real numbers in intervals with rational endpoints.
A formula in the arithmetic hierarchy  involves only quantification over integers.
A formula  is $\Pi^0_0$ and $\Sigma^0_0$ if  all  its quantifiers are bounded.  
It is $\Sigma^0_{n+1}$ if it has the form $\exists x\, \theta$  where $\theta$ is $\Pi^0_n$,
and it  is $\Pi^0_{n+1}$ if it has the form $\forall x\, \theta$ where  $\theta$ is $\Sigma^0_n$.  

A set $A$ of real numbers 
is $\Sigma^0_n$ (respectively $\Pi^0_n$) in the effective Borel hierarchy if
and only if membership in that set is definable by a formula
which is $\Sigma^0_n$ (respectively
$\Pi^0_n$).
Notice that every $\Sigma^0_n$ set is $\bS^0_n$ and every  $\Pi^0_n$ set is $\bP^0_n$.
In fact, for every set $A$ in $\bS^0_n$ there is a  $\Sigma^0_n$ formula and real parameter 
such that membership in $A$ is defined by that $\Sigma^0_n$ formula relative to that real parameter.

A set $A$ is hard for an  effective Borel class if and only if every set in the class is
reducible to $A$ by a computable map.  
As before, $A$ is complete in an effective  class if it is hard for this class and belongs to the class.
Since computable maps are continuous, 
proofs of hardness in the effective hierarchy often yield proofs of hardness in general by relativization. 

The difference hierarchy over a pointclass is  generated by taking differences of sets.  In the sequel we are just interested in the class $D_2\text{-}\bP^0_3$ which 
consists of all the sets that are difference between two sets in  $\bP^0_3$. The class $D_2\text{-}\Pi^0_3$ is the effective counterpart.

Although the definition of Poisson genericity in a given base $b$ asks for  $\lambda$-Poisson genericity in base $b$  for every positive real $\lambda$, it suffices to consider $\lambda$-Poisson genericity in base $b$  for every positive rational  $\lambda$. This is proved in Lemma \ref{lemma:rat}.  Then, by the form of its definition, the set ${\mathcal P}_b$  
is  a $\Pi^0_3$ property, hence ${\mathcal P}_b$ is a Borel set appearing as $\bP^0_3$ set in the Borel hierarchy.
%{\color{red} Actually why? Definition asks for quantifier over all $\lambda>0$...}
%\note{I added a sentence here, you are welcome to improve it .  --V.}
We shall prove completeness. We  first prove the boldface case, and then we add the needed subtleties to prove the lightface case.
We start with the following result.

\begin{thm}\label{bold}
${\mathcal P}_b$ is $\bP^0_3$-complete.
\end{thm}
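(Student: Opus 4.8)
\noindent\emph{Proof idea (proposal).} Since the paper already places ${\mathcal P}_b$ in $\bP^0_3$, only $\bP^0_3$-hardness remains, and by Wadge it suffices to produce a continuous reduction of one fixed $\bP^0_3$-complete set to ${\mathcal P}_b$. I would reduce from the canonical complete set
\[
P_3=\{(\alpha_n)_{n\in\omega}\in (2^\omega)^\omega : \text{for every }n,\ \alpha_n(m)=1\text{ for at most finitely many }m\},
\]
whose complement ``some row carries infinitely many $1$'s'' is the standard $\bS^0_3$-complete set. The goal is a continuous $f\colon (2^\omega)^\omega\to[0,1)$ with $(\alpha_n)_n\in P_3\iff f((\alpha_n)_n)\in{\mathcal P}_b$.

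The backbone of the construction is the geometric block structure forced by the definition of $Z^\lambda_{j,k}$. For a fixed $\lambda$ the statistic at scale $k$ reads the prefix $x\res\lambda b^k+k$, of length $\approx\lambda b^k$, so the window at scale $k+1$ is about $b$ times longer and contains the window at scale $k$ as a vanishing fraction. Consequently the scale-$k$ statistics are governed, up to an error tending to $0$, by the digits in the outermost ``octave'' of the window, namely those near position $\lambda b^k$; a disturbance confined there perturbs the count distribution at scale $k$ but is washed out at all sufficiently larger scales, its relative weight decaying like $b^{-(k'-k)}$. Fixing once and for all a single Poisson generic sequence $g$ (one exists by the Peres--Weiss theorem \cite{weiss2020}, see also \cite[Theorem 1]{ABM}), I use its digits as ``good filler,'' which drives every $Z^\lambda_{j,k}$ toward its Poisson target, and a fixed low-complexity pattern as a ``bad'' overwrite, which forces a fixed, $\lambda$-localized deviation of the empirical law of $k$-word multiplicities away from Poisson($\lambda$).

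The requirements are then interleaved. I choose parameters $\lambda_n\to\infty$ (say $\lambda_n=n+1$) and pairwise disjoint infinite sets of scales $K_n\subseteq\omega$, enumerating $K_n=\{k_{n,0}<k_{n,1}<\dots\}$. The digits of $x=f((\alpha_n)_n)$ agree with $g$ everywhere, except that whenever $\alpha_n(i)=1$ I overwrite the stretch of positions just below $\lambda_n b^{k_{n,i}}$ --- a constant fraction of the window $x\res\lambda_n b^{k_{n,i}}$ --- by the bad pattern. Because the $\lambda_n$ are spaced and the $K_n$ disjoint, these stretches sit at geometrically separated positions and never overlap, so $x$ is well defined; and since each stretch depends on the single bit $\alpha_n(i)$, the map $f$ is continuous. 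If row $\alpha_n$ carries infinitely many $1$'s, then $Z^{\lambda_n}_{j,k}$ is pushed away from its Poisson value at the infinitely many scales $k\in K_n$ with $\alpha_n=1$, so the limit fails for this $\lambda_n$ and $x\notin{\mathcal P}_b$. If instead every row is finite, then for each fixed rational $\lambda$ only the finitely many requirements whose $\lambda_n$ lies within the bounded affected range can influence the $\lambda$-statistics, and each contributes only finitely many overwrites; hence $Z^\lambda_{j,k}$ is perturbed at only finitely many scales and converges to its Poisson target for every $j$. By Lemma \ref{lemma:rat} this makes $x$ Poisson generic, giving the desired equivalence.

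The main obstacle is the quantitative independence lemma underlying the interleaving. I must establish three estimates for a bad stretch localized to parameter $\lambda_n$ at scale $k$: (a) it produces a deviation from Poisson($\lambda_n$) bounded below by a constant independent of $k$, so that infinitely many such insertions genuinely prevent convergence; (b) it perturbs the $\lambda$-statistics only for $\lambda$ in a bounded-ratio neighborhood of $\lambda_n$, since altering $Z^\lambda_{j,k}$ by a constant requires changing $\Omega(b^k)$ word-counts and hence moving $\lambda$ by a constant factor; and (c) it is washed out at all scales $k'\gg k$. Proving these amounts to controlling how a single structured block sits inside the empirical distribution of $k$-word multiplicities as both $k$ and $\lambda$ vary, refining the Peres--Weiss analysis to a ``one fixed dirty block, clean elsewhere'' regime; the rest --- the bookkeeping of the partition $\{K_n\}$ and the verification of continuity --- is routine.
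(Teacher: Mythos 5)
Your general template --- note that $\mathcal{P}_b\in\bP^0_3$, then reduce a known $\bP^0_3$-complete set by splicing localized corruptions into a fixed Poisson generic filler --- is the same as the paper's. But the specific design of your reduction has a gap that I believe is fatal, and it sits exactly at your claim (b). You assert that a corruption occupying a constant fraction of the prefix of length $\lambda_n b^{k_{n,i}}$ perturbs the $\lambda$-statistics only for $\lambda$ in a bounded-ratio neighbourhood of $\lambda_n$. This is false for any constant-fraction overwrite, because $Z^\lambda_{j,k}$ reads the prefix of length $\approx\lambda b^k$: the pairs $(\lambda,k)$ whose windows essentially coincide with that of $(\lambda_n,k_{n,i})$ are all those with $\lambda b^k\approx\lambda_n b^{k_{n,i}}$, and these include $\lambda=1$ with $k\approx k_{n,i}+\log_b\lambda_n$. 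Concretely, a stretch of $0$'s (or a repeated segment) of length $c\,\lambda_n b^{k_{n,i}}$ ending at position $P=\lambda_n b^{k_{n,i}}$ lies inside $[1,b^k]$ and occupies a fraction at least $c/b$ of it for the integer $k$ with $b^k\in[P,bP)$; it shifts the empirical law of length-$k$ word counts from roughly Poisson$(1)$ to roughly Poisson$(1-c')$ for a constant $c'>0$, displacing $Z^1_{0,k}$ by a fixed positive amount. Now feed your reduction the input in $P_3$ in which every row carries exactly one $1$: you insert infinitely many such stretches at positions tending to infinity, so $Z^1_{0,k}$ is displaced by a constant at infinitely many scales $k$ and $f((\alpha_n)_n)\notin\mathcal{P}_b$ even though $(\alpha_n)_n\in P_3$. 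Your heuristic (``moving $\lambda$ by a constant factor'') only addresses varying $\lambda$ at the fixed scale $k_{n,i}$; Poisson genericity quantifies over all scales, and the corruption travels along the hyperbola $\lambda b^k=\mathrm{const}$ to every value of $\lambda$. So the ``independence lemma'' you defer is not merely unproven --- as needed for your interleaving, it is false.

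The paper sidesteps this entirely by not attempting to localize violations to distinct parameters $\lambda_n$. It reduces from the single-sequence set $\cC=\{z\colon\lim_i z(i)=\infty\}$ and encodes $z(i)$ in the \emph{size} of the corruption: the block $[b^{k_{i-1}},b^{k_i})$ is copied from a fixed Poisson generic $x$ except for a terminal $\frac{1}{z(i)}$-fraction of $0$'s. If $z(i)\not\to\infty$, a fixed fraction recurs infinitely often and $1$-Poisson genericity fails (too few distinct words of length $k_i$ occur in the prefix of length $b^{k_i}$); if $z(i)\to\infty$, the corrupted fractions tend to $0$, so for \emph{every} $\lambda$ and every scale the perturbation of $Z^\lambda_{j,k}$ is $o(1)$ and $f(z)$ inherits Poisson genericity from $x$. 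To repair your argument you would need either to switch to this ``vanishing versus recurrent fraction'' encoding, or to exhibit a bad pattern whose deviation from Poisson statistics is genuinely confined to a single word-length scale while remaining invisible at all others --- and that is precisely the part your sketch leaves open, and the part I do not see how to do.
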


\begin{defn}[Borel normal number]
Let an integer $b\geq 2$. A real number $x$
is Borel normal in base $b$ if for every  block $w$ of digits in $\{0, \ldots (b-1)\}$,
\[
\lim_{n\to\infty}\frac{\text{the number of occurrences of $w$ in $x\res n$}}{n}= b^{-|w|}.
\]
\end{defn}
The set ${\mathcal P}_b$  of  real numbers that are Borel normal  in  base $b$ 
is  $\bP^0_3$-complete \cite{BS14,KiLinton}.
Every real 
 Poisson generic in base $b$ is Borel normal in  base $b$, see~\cite{weiss2020} or~\cite[Theorem 2]{BSH}.
We study the descriptive complexity of the difference set. 
Let $\sN_b$ be the set of  real numbers that Borel normal in base~$b$.

\begin{thm}\label{d2bold}
$\sN_b\sm \sP_b$ is $D_2\text{-}\bP^0_3$-complete.
\end{thm}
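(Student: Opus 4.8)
The plan is first to settle membership and then to reduce a canonical complete set. For membership, note that $\sN_b$ is $\bP^0_3$ (indeed $\bP^0_3$-complete, by \cite{BS14,KiLinton}) and, by Lemma~\ref{lemma:rat} together with the form of its definition, $\sP_b$ is $\bP^0_3$ as well. Since $\sN_b\sm\sP_b=\sN_b\cap(\R\sm\sP_b)$ is literally the difference of two $\bP^0_3$ sets, it lies in $D_2\text{-}\bP^0_3$. For hardness I would fix a $\bP^0_3$-complete set $P\subseteq\ww$ and use that $D:=\br{(\alpha,\beta)\in\ww\times\ww:\alpha\in P\text{ and }\beta\notin P}=(P\times\ww)\sm(\ww\times P)$ is $D_2\text{-}\bP^0_3$-complete: any $A\sm B$ with $A,B\in\bP^0_3$ reduces to $D$ via $x\mapsto(g(x),h(x))$, where $g,h$ are continuous reductions of $A,B$ to $P$. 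It therefore suffices to build a continuous $\Phi\colon\ww\times\ww\to[0,1)$ with $\Phi(\alpha,\beta)\in\sN_b\sm\sP_b$ iff $\alpha\in P$ and $\beta\notin P$. Concretely I aim for the three-way specification: $\Phi(\alpha,\beta)$ is normal iff $\alpha\in P$; and, whenever $\alpha\in P$, it is Poisson generic iff $\beta\in P$. Because $\sP_b\subseteq\sN_b$, these requirements are mutually consistent and give exactly $\Phi^{-1}(\sN_b\sm\sP_b)=D$.

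The construction rests on three families of finite words, one for each regime. First, \emph{constant blocks} (say runs of the digit $0$): long runs of these, if they occur along a subsequence with density bounded away from $0$, destroy normality, and otherwise --- occurring with density tending to $0$ --- are harmless. Second, \emph{de Bruijn blocks} of order $k$: a linearized de Bruijn word over $\{0,\ldots,b-1\}$ has length $b^k+k-1$ and contains every word of length $k$ exactly once. Such a block is as normal as possible at scale $k$, yet its length-$k$ count statistic is a point mass: with $\lambda=1$ a window $x\res(b^k+k)$ consisting essentially of one order-$k$ de Bruijn block forces $Z^1_{1,k}\to1$ and $Z^1_{0,k}\to0$, violating the Poisson values $e^{-1}$. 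Thus de Bruijn blocks are normality-neutral but Poisson-lethal. Third, \emph{Poisson blocks}: the positive case of the reduction of Theorem~\ref{bold} (equivalently the Peres--Weiss construction of \cite[Theorem~1]{ABM}) supplies arbitrarily long blocks whose accumulation drives $Z^\lambda_{j,k}$ to $e^{-\lambda}\lambda^j/j!$ for every rational $\lambda$ and every $j$, so that a tail built solely from these yields a Poisson generic number.

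I would then define $z=\Phi(\alpha,\beta)$ as a concatenation $W_1W_2W_3\cdots$ of blocks of lengths $\ell_i\approx b^{k_i}$ chosen so that $\ell_i$ dwarfs $\ell_1+\cdots+\ell_{i-1}$, where the type of $W_i$ is selected according to the joint state, at stage $i$, of the standard monotone approximations witnessing $\alpha\in P$ and $\beta\in P$. When the approximation for $\alpha$ signals a reset I place a constant block; otherwise, when the approximation for $\beta$ signals a reset I place an order-$k_i$ de Bruijn block; otherwise I place a Poisson block. The bookkeeping is arranged exactly as in the normality reduction of \cite{BS14,KiLinton}, so that constant blocks have density bounded below along a subsequence iff $\alpha\notin P$ and density tending to $0$ iff $\alpha\in P$, while de Bruijn blocks occur at infinitely many scales iff $\beta\notin P$. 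A routine case check then yields the specification: if $\alpha\notin P$ the constant blocks make $z$ non-normal; if $\alpha\in P$ the constant blocks are negligible, so $z$ is normal, and its Poisson genericity is governed by the de Bruijn blocks --- present infinitely often ($\beta\notin P$, giving $Z^1_{1,k_i}\to1$ along those scales, so $z\notin\sP_b$) or only finitely often ($\beta\in P$, leaving a Poisson tail, so $z\in\sP_b$). Since each digit of $z$ depends on only finitely many digits of $\alpha$ and $\beta$, the map $\Phi$ is continuous.

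The main obstacle is the quantitative de Bruijn estimate and its robustness. I must show that at each de Bruijn scale $k_i$ the window $z\res(\lambda b^{k_i}+k_i)$ is count-dominated by the single order-$k_i$ de Bruijn block: the earlier blocks $W_1\cdots W_{i-1}$ have total length $o(b^{k_i})$ and the sparse constant blocks (in the case $\alpha\in P$) contribute $o(b^{k_i})$, so together they perturb each of the $b^{k_i}$ length-$k_i$ counts by $o(1)$ on average, and words straddling block boundaries are $O(k_i)=o(b^{k_i})$ and are likewise absorbed. This forces $Z^1_{1,k_i}\to1$ and defeats Poisson genericity while leaving normality intact. The complementary, and more delicate, half is to verify that the sparse constant blocks never accidentally break normality when $\alpha\in P$ nor interfere with the convergence $Z^\lambda_{j,k}\to e^{-\lambda}\lambda^j/j!$ supplied by the Poisson tail when $\beta\in P$; this is precisely where the density dichotomy of the encoding and the fast spacing of the scales $k_i$ must be reconciled simultaneously with both the normality and the Poisson windows, and it is the step I expect to require the most care.
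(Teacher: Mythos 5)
Your overall architecture --- membership is immediate since $\sN_b$ and $\sP_b$ are both $\bP^0_3$, followed by a reduction from a product-form $D_2\text{-}\bP^0_3$-complete set via a block construction in which one family of blocks controls normality and a second controls Poisson genericity --- matches the paper's. The paper reduces $C\sm D$ with $C=\{z\colon z(2n)\to\infty\}$ and $D=\{z\colon z(2n+1)\to\infty\}$, splitting each $B_i=[b^{k_{i-1}},b^{k_i})$ into a copy of a fixed Poisson generic $x$, a block of $0$'s of relative length $1/z(2i)$, and a third block of relative length $1/z(2i+1)$; your constant and Poisson blocks play the same roles. The genuine divergence is the device for destroying Poisson genericity while keeping normality: you use order-$k_i$ de Bruijn blocks, whereas the paper fills $B^3_i$ with a \emph{repetition} of an earlier segment of $x$ and invokes Lemma~\ref{fl}, which says that for a Poisson generic number the proportion of length-$k_i$ words occurring in the last $\alpha$-fraction of $[1,b^{k_i})$ but not earlier tends to $(1-e^{-\alpha})e^{-(1-\alpha)}>0$; the repetition forces this fresh-word count to be $0$.

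The de Bruijn route has a gap that is fatal as stated. For the window $[1,b^{k_i}+k_i]$ to be count-dominated by a single order-$k_i$ de Bruijn word (length $b^{k_i}+k_i-1$), that word must occupy essentially the whole window, so each de Bruijn block necessarily dwarfs everything placed before it --- as you stipulate ($\ell_i$ dwarfs $\ell_1+\cdots+\ell_{i-1}$). But then, in the case $\alpha\in P$, $\beta\notin P$ (the one that must land in $\sN_b\sm\sP_b$), normality must be verified at positions \emph{interior} to the de Bruijn blocks, where the relevant frequency is essentially that of a proper prefix of the de Bruijn word. De Bruijn words are only guaranteed uniform word frequencies over their full length; their prefixes can be badly skewed (the lexicographically least binary de Bruijn word of order $4$ is $0000100110101111$, whose length-$8$ prefix has digit-$1$ frequency $1/4$, and this Lyndon-word-concatenation skew persists for all orders). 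So ``normality-neutral'' fails exactly where you need it, and $\Phi(\alpha,\beta)$ would fail to be normal. Repairing this requires either constructing de Bruijn words all of whose prefixes are frequency-balanced for every short word --- a nontrivial combinatorial claim you do not address --- or replacing the de Bruijn block by a repeated copy of a segment of the normal number $x$, which is automatically normality-preserving by Lemma~\ref{pl}, together with a statement like Lemma~\ref{fl} to see that the repetition contradicts Poisson genericity; that is the paper's solution. Separately, the step you flag as most delicate --- that the vanishing-density constant blocks do not disturb $Z^\lambda_{j,k}$ when $\beta\in P$, including for $\lambda>1$ where the window reaches into the current block --- is genuinely needed and is carried out in the paper via the overlap estimate of Equation~\ref{eqn:e1}; as written you assert it rather than prove it.
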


%We show that  the same result holds in the effective case. 
% When we say a set $A$ is (lightface) $\Gamma$-complete (for example 
% $\Gamma=\Pi^0_3$ or $\Gamma=D_2$-$\Pi^0_3$), 
% we mean it is in $\Gamma$ and every $\Gamma(x)$ set is reducible to $A$ by a function which is computable relative to $x$. 
The next two results are the lightface improvements of Theorems~\ref{bold} and~\ref{d2bold}.

\begin{thm}\label{light}
${\mathcal P}_b$ is $\Pi^0_3$-complete.
\end{thm}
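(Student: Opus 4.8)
The plan is to prove the two halves of completeness separately. Membership ${\mathcal P}_b\in\Pi^0_3$ is the effective reading of the observation made before Theorem~\ref{bold}. By Lemma~\ref{lemma:rat} it suffices to quantify over rational $\lambda$, so $x\in{\mathcal P}_b$ iff for every positive rational $\lambda$ and every $j\in\N$ we have $\lim_k Z^\lambda_{j,k}(x)=e^{-\lambda}\lambda^j/j!$. For fixed $(\lambda,j,k)$ the count $Z^\lambda_{j,k}(x)$ is a rational depending only on the finite prefix $x\res(\lambda b^k+k)$ and is computable uniformly in $(\lambda,j,k)$ from the digits of $x$, while $e^{-\lambda}\lambda^j/j!$ is a computable real uniformly in $(\lambda,j)$. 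Hence the limit assertion $\forall q\in\Q^{>0}\,\exists N\,\forall k\ge N\ \abs{Z^\lambda_{j,k}(x)-e^{-\lambda}\lambda^j/j!}<q$ has a recursive matrix---after absorbing the total recursive search that approximates the target to precision $q$ by a rational---so it is $\Pi^0_3$, and prefixing the universal quantifiers over $\lambda$ and $j$ keeps it $\Pi^0_3$.

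For hardness it suffices, by transitivity of computable reductions, to exhibit one computable reduction to ${\mathcal P}_b$ from a fixed lightface $\Pi^0_3$-complete set. I would use $C=\{x\in 2^{\N}:\ \text{every column of } x \text{ has only finitely many }1\text{'s}\}$, whose defining condition $\forall i\,\exists m\,\forall n\ge m\ x(\langle i,n\rangle)=0$ is $\Pi^0_3$ with recursive matrix and which is $\Pi^0_3$-complete. The reduction is the one already constructed for Theorem~\ref{bold}: $f(x)$ is a concatenation of finite words along a fixed increasing schedule of positions, carrying Poisson generic content on most stretches and ``defective'' content (such as long runs of a fixed digit) on stretches whose timing is driven by the bits of $x$, arranged so that the Poisson statistics of the windows $x\res(\lambda b^k+k)$ converge to the Poisson law for every $\lambda$ exactly when every column of $x$ is eventually $0$; that is, $x\in C\iff f(x)\in{\mathcal P}_b$.

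The only genuinely new point beyond Theorem~\ref{bold} is to make this $f$ computable, and this is where the main difficulty lies. The combinatorial analysis that encodes the $\forall i\,\exists m\,\forall n$ structure into the Poisson counts carries over unchanged, as do the schedule and the defective blocks, which are explicit recursive rules. What must be supplied is a computable choice of positive content: one fixes once and for all a single computable Poisson generic sequence $p$ and uses its prefixes (or fixed computable transforms thereof) as the generic building blocks, then checks that the boldface argument goes through verbatim with this choice. Granting such a $p$, the $n$-th digit of $f(x)$ is computed from finitely many bits of $x$ by a uniform algorithm, so $f$ is computable; hence ${\mathcal P}_b$ is $\Pi^0_3$-hard, and together with membership this gives $\Pi^0_3$-completeness. (Relativizing this computable reduction to an arbitrary real parameter recovers a continuous reduction, in agreement with Theorem~\ref{bold}.)
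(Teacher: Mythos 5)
Your proposal is correct and follows essentially the same route as the paper: membership in $\Pi^0_3$ via Lemma~\ref{lemma:rat} (reducing to rational $\lambda$), and hardness by re-running the reduction of Theorem~\ref{bold} from a standard lightface $\Pi^0_3$-complete set, with the source of genericity replaced by a single computable Poisson generic real and the block schedule made computable from the input. The one ingredient you take for granted---the computable Poisson generic sequence $p$---is exactly what the paper supplies explicitly via its Turing-style algorithm (and is also covered by the cited existence result in~\cite{ABM}), so no essential gap remains.
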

\begin{thm} \label{d2light}
$\sN_b\sm \sP_b$ is $D_2\text{-}\Pi^0_3$-complete.
\end{thm}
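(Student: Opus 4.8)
The plan is to separate the two halves of the statement. Membership in $D_2\text{-}\Pi^0_3$ is immediate: by Theorem~\ref{light} the set $\sP_b$ is $\Pi^0_3$, and $\sN_b$ is $\Pi^0_3$ (Borel normality is visibly a $\Pi^0_3$ property), so $\sN_b\sm\sP_b=\sN_b\cap(\sP_b)^c$ exhibits $\sN_b\sm\sP_b$ as a difference of two $\Pi^0_3$ sets. The real work is therefore to prove $D_2\text{-}\Pi^0_3$-hardness by a single computable reduction.

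For hardness I would fix a canonical lightface $D_2\text{-}\Pi^0_3$-complete set of product form $A_1\times A_2$, where $A_1$ is $\Pi^0_3$-complete and $A_2$ is $\Sigma^0_3$-complete; that such a product is $D_2\text{-}\Pi^0_3$-complete is the standard fact that $\{(u,v):u\in A_1\wedge v\in A_2\}=\{(u,v):u\in A_1\}\cap\{(u,v):v\notin A_2\}^c$ with both factors in $\Pi^0_3$. The goal is then to build a computable $f$ with $f(u,v)\in\sN_b\iff u\in A_1$ and, whenever $u\in A_1$, with $f(u,v)\in\sP_b\iff v\notin A_2$. Checking the three cases ($u\notin A_1$; $u\in A_1$ and $v\notin A_2$; $u\in A_1$ and $v\in A_2$) then shows $f(u,v)\in\sN_b\sm\sP_b\iff(u,v)\in A_1\times A_2$.

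The construction of $f$ should merge the two reductions already built in the paper into one base-$b$ expansion. From the proof of Theorem~\ref{light} I would extract the stronger statement that there is a computable $h$ with $h(v)$ \emph{always} Borel normal and Poisson generic exactly when $v\notin A_2$; this is possible precisely because normal-but-not-Poisson reals exist, so the failure mode of Poisson genericity can be arranged to preserve normality. I would then graft on the normality-destruction mechanism underlying the $\Pi^0_3$-hardness of $\sN_b$: build $f(u,v)$ in consecutive blocks, filling each block either with the next segment of $h(v)$ (``good'' blocks) or with a long run of a fixed digit (``junk'' blocks), the choice governed by the current $\Sigma^0_2$-approximation to the clauses of $A_1$. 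When $u\in A_1$ the junk blocks have density tending to $0$, so $f(u,v)$ inherits normality from $h(v)$ and the junk is asymptotically invisible to the counts $Z^\lambda_{j,k}$; hence $f(u,v)$ is normal and is Poisson generic iff $h(v)$ is, i.e.\ iff $v\notin A_2$. When $u\notin A_1$ the junk persists with positive density, forcing the digit frequencies away from $b^{-1}$, so $f(u,v)\notin\sN_b$.

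I expect the main obstacle to be the orthogonality of the two mechanisms, together with the quantitative Poisson estimate. Concretely, when $u\in A_1$ one must verify that sparse but infinitely many long junk runs leave the empirical count distribution $Z^\lambda_{j,k}$ with the same limit as for $h(v)$, simultaneously for every rational $\lambda$ and every $j$, noting that the window $\lambda b^k+k$ grows with $k$. This forces a careful choice of block lengths and junk placements so that every such window eventually contains only a vanishing fraction of junk positions, after which one imports the robustness estimates from the base constructions. The remaining lightface subtleties — that the block boundaries, the approximations to the clauses of $A_1$ and $A_2$, and the segments of $h(v)$ are all uniformly computable in $(u,v)$ — are exactly the bookkeeping added on top of the boldface construction of Theorem~\ref{d2bold}, and they are what upgrade $f$ to a genuine computable reduction.
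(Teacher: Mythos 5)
Your overall architecture is sound and matches the paper's in all but notation: membership is the easy half, and your product set $A_1\times A_2$ (with $A_1$ a $\Pi^0_3$-complete set and $A_2$ a $\Sigma^0_3$-complete set) is essentially the paper's hard set $C\sm D$, which asks that the even part of $z$ tend to infinity while the odd part does not. The interleaving of a normality-control mechanism with a Poisson-control mechanism, with the junk having density tending to $0$ in the positive case so that it is invisible both to word frequencies and to the counts $Z^\lambda_{j,k}$, is also exactly what the paper does.

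The gap is at the one point where you wave your hands: the existence of a computable $h$ with $h(v)$ \emph{always} Borel normal and Poisson generic exactly when $v\notin A_2$. You cannot ``extract'' this from the proof of Theorem~\ref{light}: that construction destroys $1$-Poisson genericity by writing a run of $0$'s of relative length $1/z(i)$ at the end of the block $[b^{k_{i-1}},b^{k_i})$, and in the failure case $z(i)=\ell$ infinitely often, so this run has positive density $1/\ell$ at infinitely many scales and destroys normality as well. The mere existence of normal non-Poisson reals does not by itself yield a mechanism that is switchable scale by scale and comes with a verifiable certificate of failure. Supplying that mechanism is the real content of the theorem, and the paper's solution has two ingredients you are missing: Lemma~\ref{fl}, which shows that for any Poisson generic $x$ and $\alpha=2^{-\ell}$ the proportion of length-$k_i$ words occurring in the final $\alpha$-fraction of $[b^{k_{i-1}},b^{k_i})$ but \emph{not} earlier in that block tends to the positive constant $(1-e^{-\alpha})e^{-(1-\alpha)}$; and the construction step that fills the final $\alpha$-fraction $B^3_i$ with a \emph{copy of an earlier segment} of the fixed computable Poisson generic $x$ rather than with a constant digit. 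The copying forces that proportion to be $0$ at infinitely many scales, killing Poisson genericity, while Lemma~\ref{pl} (correct word frequencies in the copied segment) guarantees normality survives. Without this lemma and this repetition trick, or a substitute for them, your reduction does not go through; the rest of your outline is bookkeeping the paper also has to do.
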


Similarly to previous consequences of differences sets of normal numbers for Cantor series expansions being $D_2\text{-}\bP^0_3$-complete \cite{AireyJacksonManceComplexityCantorSeries}, 
Theorem~\ref{d2bold} imposes limitations on the relationship between $\sN_b$ and $\sP_b$. 
An immediate consequence of Theorem~\ref{d2bold} is that the set $\sN_n \sm \sP_b$ is uncountable. 
Also, since $\sN_b\sm \sP_b$ is $D_2\text{-}\bP^0_3$-complete, there cannot be a $\bS^0_3$ set $A$ such that $A \cap \sN_b=\sP_b$ (as otherwise, we would have $\sN_b\sm \sP_b=\sN_b\sm A \in \bP_0^3$, a contradiction).
Thus, no $\bS^0_3$ condition can be added to normality to give Poisson genericity. Equivalently, any time a $\bS^0_3$ set contains $\sP_b$, it must contain elements of $\sN_b\sm \sP_b$. As an application, consider the following definition of weakly Poisson generic: 

\begin{defn}[Weakly-Poisson generic number]
Say $x\in [0,1)$ with base $b$ expansion $(x_j)$ is weakly Poisson generic in base $b$ if for every $\epsilon>0$, every rational $\lambda$, and non-negative integer~$j$, we have that for infinitely many  $k$ that 
 $|Z^\lambda_{j,k} (x)-e^{-\lambda} \frac{\lambda^j}{j!}|<\epsilon$.
 \end{defn}
 
 Note that being Poisson generic in base $b$ implies being weakly-Poisson generic. 
 However, being weakly-Poisson generic is a $\bP^0_2$ condition. So, from Theorem~\ref{d2bold} we get the following:
 
\begin{cor}
For every base $b$ there is a base-$b$ normal number which is weakly Poisson generic but not Poisson generic. 
\end{cor}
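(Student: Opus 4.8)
The plan is to combine the two facts already established in the excerpt: that $\sN_b\sm\sP_b$ is $D_2\text{-}\bP^0_3$-complete (Theorem~\ref{d2bold}), and that the class of weakly Poisson generic numbers is a $\bP^0_2$ condition. Write $\mathcal{W}_b$ for the set of weakly Poisson generic numbers in base $b$. Since being Poisson generic implies being weakly Poisson generic, we have $\sP_b\subseteq\mathcal{W}_b$, and since weak Poisson genericity implies normality (the $\liminf$ of the counting statistics still forces the correct frequencies), we also have $\mathcal{W}_b\subseteq\sN_b$. The corollary asks for a point in $\sN_b\sm\sP_b$ that additionally lies in $\mathcal{W}_b$; equivalently, we must show $\mathcal{W}_b\sm\sP_b\neq\emptyset$.

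The key observation is that $\mathcal{W}_b\sm\sP_b$ cannot be empty, because its emptiness would force $\sP_b=\mathcal{W}_b$, which contradicts the complexity computations. Indeed, suppose toward a contradiction that $\mathcal{W}_b\sm\sP_b=\emptyset$, so that $\sP_b=\mathcal{W}_b$. Then $\mathcal{W}_b$ is a $\bP^0_2$ set by hypothesis, so $\sP_b$ is $\bP^0_2$. I would then use this to contradict Theorem~\ref{d2bold}: since $\mathcal{W}_b$ is $\bP^0_2$, and every $\bP^0_2$ set is in $\bP^0_3$, the set $\sN_b\sm\mathcal{W}_b=\sN_b\cap(\R\sm\mathcal{W}_b)$ would be the intersection of a $\bP^0_3$ set with a $\bS^0_2\subseteq\bS^0_3$ set, hence $\bP^0_3$. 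But under the assumption $\sP_b=\mathcal{W}_b$ this equals $\sN_b\sm\sP_b$, so $\sN_b\sm\sP_b$ would be $\bP^0_3$, contradicting the fact that it is $D_2\text{-}\bP^0_3$-complete and therefore strictly more complex than any $\bP^0_3$ set.

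The cleanest way to phrase the contradiction is through the application already spelled out in the discussion following Theorem~\ref{d2bold}: there it is noted that there can be no $\bS^0_3$ set $A$ with $A\cap\sN_b=\sP_b$, since otherwise $\sN_b\sm\sP_b=\sN_b\sm A$ would be $\bP^0_3$. I take $A=\mathcal{W}_b$. As a $\bP^0_2$ set, $\mathcal{W}_b$ is in particular $\bS^0_3$, and the inclusions $\sP_b\subseteq\mathcal{W}_b\subseteq\sN_b$ give $\mathcal{W}_b\cap\sN_b=\mathcal{W}_b$. If we had $\mathcal{W}_b=\sP_b$, this would make $\mathcal{W}_b$ a $\bS^0_3$ set whose intersection with $\sN_b$ is exactly $\sP_b$, which is precisely the forbidden situation. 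Hence $\mathcal{W}_b\neq\sP_b$, and since $\sP_b\subseteq\mathcal{W}_b$ this means $\mathcal{W}_b\sm\sP_b\neq\emptyset$. Any witness is a base-$b$ normal, weakly Poisson generic number that is not Poisson generic, proving the corollary.

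The only genuine content beyond bookkeeping is verifying the two structural claims the argument rests on, both asserted in the excerpt: that weak Poisson genericity is a $\bP^0_2$ condition, and that it implies normality so that $\mathcal{W}_b\subseteq\sN_b$. The first is immediate from the definition, whose logical form is ``for all $\epsilon,\lambda,j$, for infinitely many $k$'' — a $\forall\,\exists^\infty$ pattern, i.e.\ $\forall\,\forall\,\exists$, which is $\bP^0_2$ once one checks that the inner condition $|Z^\lambda_{j,k}(x)-e^{-\lambda}\lambda^j/j!|<\epsilon$ is arithmetically simple in $x$. The second inclusion is the expected weakening of the argument that full Poisson genericity implies normality, which already appears in the cited references. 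I expect no serious obstacle: the entire corollary is a soft consequence of the complexity gap between $\bP^0_3$ and $D_2\text{-}\bP^0_3$, and the main thing to be careful about is stating the contradiction so that it uses only the boldface complexity (Theorem~\ref{d2bold}), which suffices, rather than inadvertently requiring the lightface refinement.
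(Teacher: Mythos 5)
Your overall strategy is the paper's own: the corollary is read off from the remark preceding it, namely that no $\bS^0_3$ set $A$ can satisfy $A\cap\sN_b=\sP_b$, applied to the $\bP^0_2$ (hence $\bS^0_3$) set $\mathcal{W}_b$ of weakly Poisson generic numbers. However, one step in your write-up does not hold up: the claimed inclusion $\mathcal{W}_b\subseteq\sN_b$, i.e., that weak Poisson genericity implies normality. The paper asserts no such thing, and it in fact fails. The weak condition only demands the Poisson statistics at \emph{infinitely many} scales $k$ (which may depend on $\epsilon$, $\lambda$, $j$), so one can start from a Poisson generic $x$ and overwrite sparse blocks of positions $[M_i, 2M_i)$ with zeros, where $M_{i+1}$ is enormous compared to $M_i$: the prefixes of length $2M_i$ then contain a proportion of zeros bounded away from $b^{-1}$, destroying normality, while for each fixed $(\epsilon,\lambda,j)$ there remain infinitely many $k$ with $\lambda b^k$ in the long clean stretches between blocks, where the corrupted positions form a negligible fraction and the statistics of $x$ are recovered. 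Your parenthetical justification (``the $\liminf$ of the counting statistics still forces the correct frequencies'') is not an argument; the implication from Poisson genericity to normality genuinely uses convergence at \emph{all} large scales.

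Fortunately the inclusion is not needed, and removing it leaves exactly the paper's proof. Run the contradiction against the hypothesis $\mathcal{W}_b\cap(\sN_b\sm\sP_b)=\emptyset$ rather than against $\mathcal{W}_b=\sP_b$: since $\sP_b\subseteq\mathcal{W}_b$ and $\sP_b\subseteq\sN_b$, that hypothesis forces $\mathcal{W}_b\cap\sN_b=\sP_b$ with $\mathcal{W}_b\in\bP^0_2\subseteq\bS^0_3$, which is precisely the configuration excluded by Theorem~\ref{d2bold}. Hence $\mathcal{W}_b\cap(\sN_b\sm\sP_b)\neq\emptyset$, and any witness is automatically normal, weakly Poisson generic, and not Poisson generic. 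This is exactly how the paper phrases it: any $\bS^0_3$ set containing $\sP_b$ must contain elements of $\sN_b\sm\sP_b$. The rest of your argument (the use of only the boldface theorem, and the identification of the relevant complexity facts) is correct and matches the paper.
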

 
As another application, consider the following version of discrepency. Suppose $f$ is a function assigning to each word $w\in b^{<\omega}$ and each positive integer $n$ a positive real number $f(w,n)$. Given $x\in [0,1)$ with base $b$ expansion $(b_j)$, say the $(w,n)$-discrepancy is 
$D(x,w,n)=| \frac{n}{b^{|w|}} - W( x\res n,w)|$, where $W(u,w)$ is the number of occurrences of $w$ in $u$. 
We say a real number $x$ has base $b$ $f$-large discrepancy if for all $w$ and all $n$ we have that 
$D(x,w,n)>f(w,n)$. The set of $x$ with $f$-large discrepancy, for any fixed $f$, is easily a $\bP^0_1$ set. 
The set of numbers that are Borel normal to base $b$ are exactly those for which the discrepancy of their initial segments of their expansion  in base $b$ goes to zero.
We conjecture that the Poisson generic numbers in base $b$ can not have very low discrepancy of their initial segments (for instance,  the  infinite de Bruijn sequences exist in bases $b\geq 3$,  they  satisfy that $Z^1_{1,k}=1$ for every $k$, hence they do not correspond to  Poisson generic numbers, and they have low discrepancy.)
However, we have the following, which states that the Poisson generic reals cannot be characterized as the set of normal numbers satisfying a large discrepancy condition. 

\begin{cor}
For every function $f$, the set of base-$b$ Poisson generic reals is not equal to the set of normal numbers with $f$-large discrepancy.
\end{cor}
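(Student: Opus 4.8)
The plan is to recognize the corollary as a direct instance of the general principle established immediately after Theorem~\ref{d2bold}: no $\bS^0_3$ set, intersected with $\sN_b$, can equal $\sP_b$. So the whole task reduces to checking that the set of reals with $f$-large discrepancy lies in $\bS^0_3$, and then invoking that principle.

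First I would fix $f$ and write $L_f$ for the set of reals with base $b$ $f$-large discrepancy. As already observed in the text, $L_f$ is $\bP^0_1$: for each fixed word $w$ and integer $n$ the quantity $D(x,w,n)$ depends only on $x\res n$, so the constraint $D(x,w,n)>f(w,n)$ is clopen, and $L_f=\bigcap_{w,n}\{x\colon D(x,w,n)>f(w,n)\}$ is an intersection of clopen sets, hence closed. Since $\bP^0_1\subseteq\bS^0_2\subseteq\bS^0_3$, in particular $L_f\in\bS^0_3$.

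Next, arguing by contradiction, suppose that $\sP_b=\sN_b\cap L_f$. Taking $A=L_f$ as the witnessing $\bS^0_3$ set, we would have $A\cap\sN_b=\sP_b$, whence
\[
\sN_b\sm\sP_b=\sN_b\sm A=\sN_b\cap A^c.
\]
Because $\sN_b$ is $\bP^0_3$ and $A^c$ is $\bP^0_3$ (as the complement of a $\bS^0_3$ set), their intersection is $\bP^0_3$, so $\sN_b\sm\sP_b\in\bP^0_3$. This contradicts Theorem~\ref{d2bold}: a $D_2\text{-}\bP^0_3$-complete set cannot lie in $\bP^0_3$, since $\bP^0_3$ is closed under continuous preimages while $D_2\text{-}\bP^0_3\not\subseteq\bP^0_3$. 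Hence $\sP_b\neq\sN_b\cap L_f$, which is exactly the assertion that the Poisson generic reals are not equal to the normal numbers with $f$-large discrepancy.

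There is essentially no hard step here: the mathematical content is carried entirely by Theorem~\ref{d2bold}. The only point that warrants care is the complexity bookkeeping — confirming that $L_f$, being closed, lands in $\bS^0_3$ so that the general principle applies verbatim — together with the observation that $D_2\text{-}\bP^0_3$-completeness genuinely precludes membership in $\bP^0_3$.
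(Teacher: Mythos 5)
Your proof is correct and follows essentially the same route as the paper: the corollary is obtained there exactly by noting that the set of reals with $f$-large discrepancy is $\bP^0_1\subseteq\bS^0_3$ and invoking the observation (derived from Theorem~\ref{d2bold}) that no $\bS^0_3$ set $A$ can satisfy $A\cap\sN_b=\sP_b$. Your complexity bookkeeping, including the point that a $D_2\text{-}\bP^0_3$-complete set cannot itself be $\bP^0_3$, matches the paper's argument.
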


There are also many other naturally occurring sets of real numbers are defined by conditions which make them $\bS^0_3$. 
Examples include countable sets, co-countable sets, the class BA of {\em badly approximable} numbers (which is a $\bS^0_2$ set), the Liouville numbers (which is a $\bP^0_2$ set),
and the set of $x\in [0,1]$ where a particular continuous function $f\colon [0,1]\to \R$ is not differentiable. 
In all these cases, the theorem implies that either the set omits some Poisson generic number, or else contains a number which is normal but not Poisson generic. Of course, many of these statements are easy to see directly, but the point is that they all follow immediately from the general complexity result, Theorem~\ref{d2bold}.

The set of real numbers whose expansion in {\em every}  integer base  is Poisson generic is of course~$\Pi^0_3$, but we do not know yet how to prove  that this set is $\Pi^0_3$-complete.
The set of real numbers  whose expansion in one base is $\lambda'$-Poisson generic but not $\lambda'$-Poisson generic, for different positive real numbers $\lambda $ and $\lambda'$, is $D_2\text{-}\Pi^0_3$ but we do not know if it is complete.

The result in the present note  contribute to the corpus of work on the descriptive complexity  
of properties of real numbers
that started with the questions of Kechris on the descriptive complexity of the set 
of Borel normal numbers.
He conjectured that set of
absolutely normal numbers (normal to all integer bases) is $\bp^0_3(\mathbb{R})$-complete.
 Ki and Linton \cite{KiLinton} gave the first  result towards solving the conjecture by showing that the set of numbers that are normal to base $2$ is $\bP^0_3$-complete.
Then  V.\ Becher,
P.\ A.\ Heiber, and T.\ A.\ Slaman \cite{BecherHeiberSlamanAbsNormal} settled Kechris'  conjecture.
Furthermore, V.\ Becher and T.\ A.\ Slaman \cite{BecherSlamanNormal}
proved that the set of numbers normal in at least one base is
$\bs^0_4(\mathbb{R})$-complete. In another direction, D.\ Airey, S.\ Jackson,
D.\ Kwietniak, and B.\ Mance \cite{AJKM} and, more generally K. Deka,  S.\ Jackson,
D.\ Kwietniak, and B.\ Mance in \cite{AJKMDynamics}
showed that for any dynamical system with
a weak form of the specification
property, the set of generic points for the system is $\bP^0_3$-complete. This result
generalizes the Ki-Linton result to many numeration systems other than the standard base $b$ one. In general, the Cantor series expansions are not covered in this generality, so D. Airey, S. Jackson, and B. Mance \cite{AireyJacksonManceComplexityCantorSeries} determined the descriptive complexity of various sets of normal numbers in these numeration systems.

\section{Boldface}

We write $\mu$ for the Lebesgue measure on the real numbers.
%uniform measure in $b^\omega$.
From Peres and Weiss metric theorem~\cite{weiss2020,ABM} asserting that $\mu$-almost all 
real numbers in the unit interval 
are Poisson generic in each integer base $b$, we have the following.

 For $\mu$ almost all real numbers $x$ in the unit interval the following holds.  Fix an integer base $b\geq 2$ and any $\alpha \in (0,1)$. 
 Then for any non negative integer $j$, and any $\epsilon >0$, for all large enough $k$ we have that 
 \[
 \Big|Z^{(1-\alpha)}_{j,k}(x)-e^{-(1-\alpha)} \frac{(1-\alpha)^j}{j!}\Big|<\epsilon.
 \]

\begin{proof}[Proof of Theorem~\ref{bold}]
Let $\cC=\{ z \in (\omega\sm\{ 0,1\})^\omega\colon \lim_i z(i)=\infty\}$. So, $\cC$ is $\bP^0_3$-complete. 
We define a continuous map $f \colon \ww\to (0,1)$
which reduces $\cC$ to $\sP_b$, that is, $f(z)\in\sP_b$ if and only if $z\in \cC$. 
Fix $z \in (\omega \sm\{0,1\})^\omega$. 
At step $i$ we define $f(z)\res [b^{k_{i-1}}, b^{k_i})$, where $\{ k_i\}$ is a sufficiently fast-growing sequence of positive integers.
Let 
\begin{align*}
B_i&:=[b^{k_{i-1}}, b^{k_i}), 
\\
B'_i&:=\left[b^{k_{i-1}}, \big(1-\frac{1}{z(i)}\big)b^{k_i} \right).
\end{align*}
The set $B'_i$ is non-empty as we may assume 
$k_i>2k_{i-1}$. We set 
\[
f(z)\res B'_i=x\res B'_i \text{ and } 
f(z)\res B_i\sm B'_i=0. 
\]

First suppose $z \notin \cC$, and fix $p \in \omega$ such that for infinitely many $i$ 
we have $z(i)=p$. Consider step $i$ in the construction of $f(z)$ for such an $i$. 
For any $\epsilon >0$, if $i$ is large enough then the number of words $w$ of length 
$k_i$ which occur in $x\res [1, (1-\frac{1}{z(i)})b^{k_i}]$
is at most 
\[
b^{k_i} (1-e^{-(1-\frac{1}{z(i)})}+\epsilon).
\]
So, the number $Z_i$ of words $w$ of length $k_i$ 
which occur in $f(z)\res b^{k_i}$ is at most 
\[
b^{k_i} \big(1-e^{-(1-\frac{1}{z(i)})}+\epsilon\big) + b^{k_{i-1}}.
\]
So, 
\[
\frac{1}{b^{k_i}} Z_i\leq  \big(1-e^{-(1-\frac{1}{p})}+2\epsilon\big)
\]
if $i$ is large enough 
using the fact that the $k_i$ grow sufficiently fast. On the other hand,
the Poisson estimate for the proportion of words of length $k_i$ occurring in a Poisson
generic sequence of length $b^{k_i}$ is $1-{1}/{e}$. Since $p$ is fixed, as $i$ 
gets large we have a contradiction. So, $f(z)$ is not $1$-Poisson generic.

Next suppose that $z \in \cC$. We show that $f(z)$ is Poisson generic in base $b$. 
Fix $\lambda>0$ and $\ell \in \omega$. Fix also $\epsilon >0$. Consider $k\in \omega$, 
and let $i$ be such that $k_{i-1}\leq  k  <k_i$. We show that for $k$ (and hence $i$) sufficiently 
large that 
$|Z^\lambda_{\ell,k}(f(z))- e^{-\lambda}\frac{\lambda^\ell}{\ell!}|<\epsilon$. 
Assume $i$ is large enough so that $\frac{1}{z(j)} <\epsilon$ for all $j \geq i-1$. 
First consider the case $\lambda \leq 1$. Note that, as $z(i)\geq 2$,
\[
b^k\leq \frac{1}{b} b^{k_i} \leq 
b^{k_i} \big(1-\frac{1}{z(i)}\big).
\]
We have that

\begin{equation} \label{eqn:e1}
\begin{split}
|\frac{1}{b^k} Z^\lambda_{\ell,k}(f(z))- \frac{1}{b^k} Z^\lambda_{\ell,k} (x)|
& \leq \frac{1}{b^k} \left( b^{k_{i-1}} \frac{1}{z(i-1)} + 6k + b^{k_{i-2}} \right)
\\ & 
\leq \frac{1}{z(i-1)} +\epsilon 
\\ &
\leq 2\epsilon.
\end{split}
\end{equation}
for $i$ large enough. We have used here the fact that  
\[
|Z^\lambda_{\ell,k}(f(z))- Z^\lambda_{\ell,k}(x)|
\]
is at most the number of words of length $k$ which appear in one of $f(z)\res b^k$, $x\res b^k$
at a position which overlaps the block $[b^{k_{i-1}}(1-\frac{1}{z(i-1)}),
b^{k_{i-1}})$, or else overlaps the block $[1,b^{k_{i-2}}]$, which gives the above estimate.

Consider now the case $\lambda >1$. If $\lambda b^k < b^{k_i} (1-\frac{1}{z(i)})$, then the same estimate above works. 
So, suppose $b^k \geq \frac{1}{\lambda} b^{k_i} (1-\frac{1}{z(i)})$. 
We may assume that 
\[
\lambda b^k< \frac{1}{2} b^{k_{i+1}} \leq (1-\frac{1}{z(i+1)}) b^{k_{i+1}}
\]
since $\lambda$ is fixed and the $k_i$ grow sufficiently fast (in particular $\frac{b^{k_{i+1}}}{b^{k_i}} 
\to \infty$). 
In this case we also count the number of words $w$ of length $k$ 
which might overlap the block of $0$s in $f(z)\res [b^{k_i}(1-\frac{1}{z(i)}), b^{k_i}]$. We then get 
\begin{equation*}
\begin{split}
\Big|\frac{1}{b^k} Z^\lambda_{\ell,k}(f(z))- \frac{1}{b^k} Z^\lambda_{\ell,k} (x)\Big|
& \leq \frac{1}{b^k} \Big( b^{k_{i-1}} \frac{1}{z(i-1)} + b^{k_i} \frac{1}{z(i)} +10k + b^{k_{i-2}} \Big)
\\ & 
\leq \frac{1}{z(i-1)} + \frac{b^{k_i}}{b^k} \frac{1}{z(i)} +\epsilon 
\\ &
\leq \frac{1}{z(i-1)}+ \lambda \frac{1}{1-\frac{1}{z(i)}} \frac{1}{z(i)}+\epsilon
\\ &
\leq 2\epsilon.
\end{split}
\end{equation*}
if $i$ is sufficiently large, since $\lambda$ is fixed and $z(i)\to \infty$.  
\end{proof}

For the proof of Theorem~\ref{d2bold} we require the following two lemmas.

\begin{lem} \label{fl}
Fix an integer  $b\geq 2$. Almost all real numbers in $(0,1)$
have the property that 
for any $\alpha$ of the form $\alpha=\frac{1}{2^\ell}$
we have 
\[
\lim_{i \to \infty} \frac{1}{b^{k_i}} |H_i|= (1-e^{-\alpha})(e^{-(1-\alpha)}),
\]
where $H_i$ is the set of words of length $k_i$ which occur in the base-$b$ expansion of $x$ with a starting position 
$[(1-\alpha)b^{k_i}, b^{k_i})$, but do not occur with a starting position in 
$[b^{k_{i-1}}, (1-\alpha)b^{k_i}]$.\\ 
In fact, this claim holds for any $x$ which is Poisson generic in base~$b$. 
\end{lem}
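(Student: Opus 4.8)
The plan is to reduce the two-window condition defining $H_i$ to a difference of ordinary one-window occurrence counts, each of which is controlled directly by Poisson genericity. Write $U$ for the set of starting positions in $[b^{k_{i-1}},(1-\alpha)b^{k_i}]$ and $V$ for the set of starting positions in $[(1-\alpha)b^{k_i},b^{k_i})$, and let $A$ and $B$ be the sets of words of length $k_i$ that occur somewhere in $U$, respectively $V$. Then $H_i=B\setminus A$, so the exact inclusion--exclusion identity
\[
|H_i|=|A\cup B|-|A|
\]
holds, where $A\cup B$ is the set of words occurring with a starting position in $[b^{k_{i-1}},b^{k_i})$ and $A$ is the set occurring with a starting position in $[b^{k_{i-1}},(1-\alpha)b^{k_i}]$. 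This is the key observation: the awkward ``occurs here but not there'' condition becomes a subtraction of two monotone occurrence counts, and no independence of disjoint windows needs to be justified.

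Next I would rewrite $|A|$ and $|A\cup B|$ in terms of the quantities $Z^\lambda_{0,k_i}$. The number of words of length $k_i$ occurring at least once among starting positions $[1,M]$ is $b^{k_i}$ minus the number occurring zero times, and for $M=\lambda b^{k_i}$ the latter is $b^{k_i}Z^\lambda_{0,k_i}(x)$ up to an $O(k_i)$ error coming from the gap between ``window length'' and ``number of starting positions'' built into the definition of $Z^\lambda_{0,k_i}$. Taking $\lambda=1$ for $A\cup B$ and $\lambda=1-\alpha$ for $A$, and absorbing the change of lower endpoint from $1$ to $b^{k_{i-1}}$ into an error of at most $b^{k_{i-1}}$ words, I obtain
\[
\frac{|A\cup B|}{b^{k_i}}=1-Z^1_{0,k_i}(x)+O\!\left(\tfrac{k_i+b^{k_{i-1}}}{b^{k_i}}\right),
\qquad
\frac{|A|}{b^{k_i}}=1-Z^{1-\alpha}_{0,k_i}(x)+O\!\left(\tfrac{k_i+b^{k_{i-1}}}{b^{k_i}}\right).
\]

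Finally I would pass to the limit. If $x$ is Poisson generic in base $b$ then $Z^1_{0,k_i}(x)\to e^{-1}$ and $Z^{1-\alpha}_{0,k_i}(x)\to e^{-(1-\alpha)}$, and since $\{k_i\}$ grows fast enough that $(k_i+b^{k_{i-1}})/b^{k_i}\to 0$, the error terms vanish. Subtracting gives
\[
\lim_{i\to\infty}\frac{|H_i|}{b^{k_i}}=\big(1-e^{-1}\big)-\big(1-e^{-(1-\alpha)}\big)=e^{-(1-\alpha)}-e^{-1}=(1-e^{-\alpha})\,e^{-(1-\alpha)},
\]
which is the asserted value, the last algebraic step being just $e^{-(1-\alpha)}-e^{-1}=(1-e^{-\alpha})e^{-(1-\alpha)}$. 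The almost-everywhere statement then follows from the Peres--Weiss theorem that almost every real is Poisson generic, and the dyadic restriction $\alpha=1/2^\ell$ plays no role: any $\alpha\in(0,1)$ works, so all the required values $\lambda=1-\alpha$ are supplied simultaneously by Poisson genericity.

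I expect the main obstacle to be bookkeeping rather than conceptual. One must check that the two error sources --- the $O(k_i)$ discrepancy between window length and starting-position count in the definition of $Z^\lambda_{0,k_i}$, and the replacement of the lower endpoint $1$ by $b^{k_{i-1}}$ --- are both genuinely $o(b^{k_i})$, which is precisely where the hypothesis that $\{k_i\}$ grows sufficiently fast is used. Once those two estimates are in place, the conclusion is immediate, because the inclusion--exclusion identity has already converted the whole problem into the asymptotics of $Z^1_{0,k_i}$ and $Z^{1-\alpha}_{0,k_i}$.
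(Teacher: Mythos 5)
Your proof is correct and follows essentially the same route as the paper: you write $H_i$ as the difference of the monotone occurrence counts over $[b^{k_{i-1}},b^{k_i})$ and $[b^{k_{i-1}},(1-\alpha)b^{k_i}]$ (the paper's $A_i\setminus C_i$ with $C_i\subseteq A_i$), shift the lower endpoint to $1$ at a cost of $b^{k_{i-1}}$ words, and invoke Poisson genericity via $Z^{1}_{0,k_i}\to e^{-1}$ and $Z^{1-\alpha}_{0,k_i}\to e^{-(1-\alpha)}$. The only difference is cosmetic: you make the identification with $Z^{\lambda}_{0,k_i}$ and the $O(k_i)$ window-length bookkeeping explicit, whereas the paper states the corresponding estimates for the counts $|A_i'|$, $|C_i'|$ directly.
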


\begin{proof}
Let $x\in(0,1)$
be Poisson generic in base $b$ and fix $\alpha$ a negative power of $2$. 
Let 
\begin{itemize}
    \item $A_i$ be the set of  words of length $k_i$ occurring in $[b^{k_{i-1}}, b^{k_i})$.
    \item $C_i$ be the set of words of length $k_i$ occurring in  $[b^{k_{i-1}}, (1-\alpha)b^{k_i}))$. 
\end{itemize}

Clearly $C_i\subseteq A_i$.
The words which occur in $[(1-\alpha)b^{k_i}, b^{k_i})$ but not in $[b^{k_{i-1}}, (1-\alpha)b^{k_i}))$
are exactly the words which occur in $A_i$ but not $C_i$. 

Let
\begin{itemize}
    \item $A'_i$ be the set of  words that occur in $[1, b^{k_i})$ 
\item 
$C'_i$ be the set of  words that occur in $[1, (1-\alpha)b^{k_i})$.
\end{itemize}
Then 
\[
| | A_i \sm C_i|- |A'_i\sm C'_i| | \leq b^{k_{i-1}}.
\]
Since $x$ is Poisson generic in base $b$, for any $\epsilon >0$  we have that 
for all large enough~$i$ that 
\[
 \Big| \frac{1}{b^{k_i}} | A'_i| - (1-\frac{1}{e})\Big| <\epsilon.
\]
Similarly, as $x$ is Poisson generic in base $b$, and using $\lambda =1-\alpha$, we have that 
\[
\Big| \frac{1}{b^{k_i}} |C'_i| - (1- e^{-(1-\alpha)}) \Big| <\epsilon.
\]
So, 
\begin{equation*}
\begin{split}
\frac{1}{b^{k_i}} | A_i \sm C_i| & \leq \frac{1}{b^{k_i}} | A'_i \sm C'_i| + \frac{b^{k_{i-1}}}{b^{k_i}}
\\ & \leq (1-\frac{1}{e}) -(1- e^{-(1-\alpha)})+ \frac{b^{k_{i-1}}}{b^{k_i}}
+2\epsilon
\\ & \leq e^{-(1-\alpha)}(1-e^{-\alpha}) +3\epsilon. \qedhere
\end{split}
\end{equation*}
\end{proof}

%% \co{How do you obtain the  computation this Lemma?.
%% I see  this :
%% \[
%% \lim_{i \to \infty} \frac{1}{b^{k_i}} |H_i|  = \frac{1}{b^{k_i}} ( A_i - C_i )
%% \]
%% where\\ $A_i$ estimates 
%%  the  number of different words of  length $k_i$ that appear in $x[1, b^{k_i}]$ 
%% \[ 
%% A_i= b^{k_i} (1-e^{-1})
%% \]
%% and \\
%% $C_i$ estimates the    number of different words of  length $k_i$ 
%% that appear in $x[1, B_i^1 \cup B_i^2]$ 
%% \[
%% C_i= (1-\alpha) b^{k_i} (e^{1-\alpha} (1-\alpha) b^{k_i}
%% \]
%% }

Assume $x$ lies in the measure one set of Lemma~\ref{fl} and that the $k_i$ grow fast enough, then 
\[
\Big|\frac{1}{b^{k_i}}|H_i|- (1-e^{-\alpha})(e^{-(1-\alpha)})\Big|< \frac{1}{2^i}.
\]

A standard probability computation shows the following. 
\begin{lem} \label{pl}
There is a function $g \colon\omega \to \omega$ such that the following holds. 
Suppose $k_0<k_1<\cdots$ are such that $b^{k_i}-b^{k_{i-1}}>g(i-1)$ for all $i$. 
Then $\mu$-almost all $x \in (0,1)$ satisfy the following: for any $j \in \omega$, any
$w\in b^j$ and any $\epsilon >0$, 
for all large enough $i$, and any 
$n> g(i-1)$
\[
\Big| \frac{1}{n} W(x\res [b^{k_{i-1}}, b^{k_{i-1}}+n), w) -\frac{1}{b^j}\Big|<\epsilon
\]
where $W(s,w)$ is the number of occurrences of the word $w$ in $s$.
\end{lem}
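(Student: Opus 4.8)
The plan is to treat the base-$b$ expansion of a $\mu$-random $x\in(0,1)$ as a sequence of independent digits, each uniform on $\{0,\dots,b-1\}$, and to establish the stated uniform equidistribution by a concentration estimate followed by Borel--Cantelli. I would fix once and for all a length $j$, a word $w\in b^j$, and a tolerance $\epsilon=1/r$ with $r\in\N$; since there are only countably many such triples, it suffices to produce for each fixed triple a full-measure set on which the conclusion holds and then intersect over all of them. For a position $p$ let $Y_p$ be the indicator that the digits of $x$ at positions $p,\dots,p+j-1$ spell $w$, so $Y_p$ is Bernoulli with mean $b^{-j}$, and $W(x\res[P,P+n),w)=\sum_{p=P}^{P+n-j}Y_p$, which differs from $\sum_{p=P}^{P+n-1}Y_p$ by at most $j$.

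The observation that tames the dependence among the $Y_p$ is that within any fixed residue class modulo $j$ the indicators are mutually independent, since indices differing by a multiple of $j$ involve disjoint blocks of digits. Thus, writing $P=b^{k_{i-1}}$ and partitioning $\{P,\dots,P+n-1\}$ into its $j$ residue classes modulo $j$, the sum $\sum_{p=P}^{P+n-1}Y_p$ decomposes into $j$ partial sums, each over roughly $n/j$ independent Bernoulli$(b^{-j})$ variables. Applying Hoeffding's inequality to each class and taking a union bound over the $j$ classes then yields, for an absolute constant $c>0$,
\[
\mu\pr{\abs{\frac{1}{n}\sum_{p=P}^{P+n-1}Y_p-b^{-j}}\ge \frac{\epsilon}{2}}\le 2j\,e^{-c\epsilon^2 n/j},
\]
a tail that is exponentially small in the window length $n$ and, crucially, uniform over the starting position $P$ and over the actual values $k_i$.

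Because this tail is exponentially small in $n$, I can afford a crude union bound over all admissible window lengths at once, which avoids any geometric-blocking interpolation. I would set $g(i)=i$ (any $g$ with $g(i)\to\infty$ works), and for each $i$ let $B_i$ be the event that some $n>g(i-1)$ gives a window starting at $b^{k_{i-1}}$ whose frequency of $w$ deviates from $b^{-j}$ by at least $\epsilon$. Once $i$ is large the $O(j/n)$ discrepancy between $W$ and $\sum_{p}Y_p$ is below $\epsilon/2$, so summing the displayed estimate over $n>g(i-1)$ gives $\mu(B_i)\le C(j,\epsilon)\,e^{-c\epsilon^2 g(i-1)/j}$; this is a convergent geometric-type tail, so $\sum_i\mu(B_i)<\infty$. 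Borel--Cantelli then shows that $\mu$-almost surely only finitely many $B_i$ occur, i.e.\ for all large enough $i$ every $n>g(i-1)$ is good, and intersecting the resulting full-measure sets over all triples $(j,w,r)$ finishes the argument.

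I expect the only genuine subtlety to be the dependence among overlapping occurrences of $w$, which is exactly what the residue-class decomposition removes, turning the problem into $j$ independent-summand Hoeffding estimates. The growth hypothesis $b^{k_i}-b^{k_{i-1}}>g(i-1)$ plays no role in the probabilistic core; it is used only to guarantee that each block $B_i=[b^{k_{i-1}},b^{k_i})$ is long enough to contain windows whose length exceeds the threshold $g(i-1)$, so that the stabilized-frequency conclusion applies to substantial initial segments of the blocks as needed downstream. Everything else---the boundary correction and the choice of $g$---is routine once the exponential tail is in hand.
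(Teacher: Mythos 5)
Your proposal is correct and follows essentially the same route as the paper: take $g(n)=n$, reduce to countably many triples $(j,w,\epsilon)$, obtain an exponential tail bound $\alpha e^{-\beta n}$ for the deviation of the $w$-frequency in a length-$n$ window, sum over $n\ge i$ and $i\ge i_0$, and conclude by Borel--Cantelli. The only difference is that you supply the proof of the concentration estimate (residue-class decomposition modulo $j$ plus Hoeffding), which the paper simply asserts as a standard fact with unnamed constants $\alpha,\beta$.
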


\begin{proof}
We can take $g(n)=n$. Fix $j $ and $w \in b^j$, and fix $\epsilon>0$. It suffices to show that 
for almost all $x$ that for all large enough $i$ and any $n>g(i-1)=i-1$ that 
\[
\left| \frac{1}{n} W(x\res [b^{k_{i-1}}, b^{k_{i-1}}+n), w) -\frac{1}{b^j}\right|<\epsilon.
\]
There are constants $\alpha,\beta>0$ such that for all $n$, the probability that a string 
$s \in b^n$ violates the inequality 
$|\frac{1}{n} W(s,w) -\frac{1}{b^j}|<\epsilon$ is less than $\alpha e^{-\beta n}$.
So, the probability that an $x \in (0,1)$ violates 
$| \frac{1}{n} W(x\res [b^{k_{i-1}}, b^{k_{i-1}}+n), w) -\frac{1}{b^j}|<\epsilon$ for some 
$i\geq i_0$ and $n\geq i$ is at most 
\[
\sum_{i\geq i_0} \sum_{n \geq i} \alpha e^{- \beta n}
\leq \sum_{i \geq i_0} \alpha \frac{e^{-\beta i}}{1-e^{-\beta}} = \frac{\alpha e^{-\beta i_0}}{(1-e^{-\beta})^2}.
\]
Since this tends to $0$ with $i_0$, the result follows.
\end{proof}

We can now give the proof of the $D_2$-$\bP^0_3$ completeness of the difference set ${\mathcal N}_b\setminus {\mathcal P}_b$.

\begin{proof}[Proof of Theorem~\ref{d2bold}]
We fix a sufficiently fast growing sequence $k_0<k_1<\cdots$ as in Lemma~\ref{pl},
and then fix $x \in (0,1)$
to be Poisson generic in base $b$ (so that Lemma~\ref{fl} holds)
and also to be in the measure one set where Lemma~\ref{pl} holds for 
this sequence $(k_i)_{i\geq 0}$.

We let $C=\{ z \in \omega^\omega \colon z(2n)\to \infty\}$, and $D=\{ z \in \ww\colon z(2n+1)\to \infty\}$.
We define a continuous map  $f\colon \ww \to (0,1)$ which reduces $C\sm D$ to $\sN_b\sm \sP_b$.
The idea to define $f$ so that for $z \in \ww$, the even digits $z(2i)$ will control whether
$f(z)\in \sN_b$ and the odd digits $z(2i+1)$ will control whether $f(z)\in \sP_b$. 
When we wish to violate Poisson genericity, we will do so for $\lambda=1$ and $j=0$. 
We may assume without loss of generality that all $z(i)$ and all $k_i$ are positive powers of $2$. 

As before, at step $i$ we define $f(z)\res B_i$, where $B_i=[b^{k_{i-1}}, b^{k_i})$. 
Let 

\begin{equation*}
\begin{split}
B^1_i&:=[b^{k_{i-1}}, (1-\frac{1}{z(2i)}-\frac{1}{z(2i+1)}) b^{k_i})
\\ 
B^2_i&:=[(1-\frac{1}{z(2i)}-\frac{1}{z(2i+1)}) b^{k_i}, (1-\frac{1}{z(2i+1)})b^{k_i})
\\
B^3_i&:=[ (1-\frac{1}{z(2i+1)})b^{k_i}, b^{k_i})
\end{split}
\end{equation*}
So, 
\begin{align*}
|B^2_i|&= \frac{1}{z(2i)} b^{k_i}, 
\\
|B^3_i|&=\frac{1}{z(2i+1)} b^{k_i}. 
\end{align*}
We let 
\begin{align*}
f(z)\res B^1_i &:=x\res B^1_i,\\
%We set $
f(z)\res B^2_i&:=0,
\\
f(z)\res B^3_i&:= x\res [b^{k_{i-1}}, b^{k_{i-1}}+ |B^3_i|)=  x\res \Big[b^{k_{i-1}}, b^{k_{i-1}}+\frac{1}{z(2i+1)} b^{k_i}\Big). 
\end{align*}
We show that $f$ is a reduction from 
$C\sm D$ to $\sN_b\sm \sP_b$.

%% Let $H_i$ be the set of words of length $k_i$ which occur with a starting position in 
%% $B^3_i$ but do not occur with a starting position in $B^1_i\cup B^2_i$. 

First assume $z \notin C$, that is $z(2i)$ does not tend to $\infty$. Fix $\ell$
such that $z(2i)=\ell$ for infinitely many $i$. We easily have that $f(z)\notin \sN_b$. 
For example, if the digit $0$ occurs with approximately the right frequency $\frac{1}{b}$
in 
\[
f(z)\res [1, b^{k_{i-1}}+|B^1_i|)=\Big[0, b^{k_i}\Big(1-\frac{1}{z(2i)}-\frac{1}{z(2i+1)}\Big) \Big),
\]
then $0$ will occur with too large a frequency in 
\[
f(z)\res [1,b^{k_{i-1}}+|B^1_i|+|B^2_i|)=
\Big[0, b^{k_{i-1}}+|B^1_i|+ \frac{1}{\ell} b^{k_i}\Big).
\]
This is because $f(z)\res B^i_2=0$
and $|B^i_2|=\frac{1}{\ell} b^{k_i}$ for such $i$.

So we may henceforth assume that $z \in C$,
so $\frac{1}{b^{k_i}} |B^2_i|= \frac{1}{z(2i)}\to 0$. 
We  observe that 
this implies that $f(z) \in \sN_b$. 
This follows from Lemma~\ref{pl} and that we may assume 
$\lim_i \frac{g(i-1)}{b^{k_{i-1}}}=0$.

Now assume  that $z \in D$, so $z \notin C\sm D$. We show $f(z) \in \sP_b$, and so 
$f(z)\notin \sN_b \sm \sP_b$. Since we are assuming $z \in C$ also, we have 
$\lim_{i \to \infty} z(i)=\infty$. So, $\lim_{i \to \infty} \frac{1}{b^{k_i}} (|B^2_i|+|B^3_i|)=0$.
It then follows exactly as in Equation~\ref{eqn:e1} in the proof of 
Theorem~\ref{bold} that $f(z)\in \sP_b$. 

Assume next that $z \notin D$ (but $z \in C$ still). We show that $f(z)\notin \sP_b$,
which shows $f(z) \in \sN_b \sm \sP_b$. Fix $m$ so that for infinitely many $i$
we have $z(2i+1)=m$,
and we may assume $m$ is of the form $m=2^\ell$.  Recall $\frac{1}{b^{k_i}} |B^3_i|= \frac{1}{z(2i+1)}
=\frac{1}{2^\ell}$ for such $i$. We restrict our attention to this set of $i$ in the following
argument. If $f(z)$ were Poisson generic, then from Lemma~\ref{fl} we would have that 
for large enough $i$ in our set that 
\[
\frac{1}{b^{k_i}} |H_i|\approx (1-e^{-\alpha})(e^{-(1-\alpha)}),
\]
where $H_i$ is the set of words of length $k_i$ which occur in $f(z)$
with a starting position in $[(1-\alpha)b^{k_i}, b^{k_i})$, but do not occur in $f(z)$ with a starting position in 
$[b^{k_{i-1}}, (1-\alpha)b^{k_i})$. However, by the construction of $f(z)$ we have that every 
word which occurs in $[(1-\alpha)b^{k_i}, b^{k_i})$ also occurs in $[b^{k_{i-1}}, (1-\alpha)b^{k_i})$,
and so $|H_i|=0$. 
\end{proof}

\section{Lightface refinements}

The existence of computable Poisson generic real number was proved in~\cite[Theorem 2]{ABM}.
We start showing how to compute an instance
of a Poisson generic real number  in base~$b$.

% \begin{defn}[Bad sets]
% Fix integer base $b\geq 2$. 
% Define for $k\in\N,j\in\N_0$, $\lambda\in\R^+$, $\varepsilon\in\Q$,
% \begin{align*}
% Bad(\lambda, k, j,\varepsilon) &:= 
% \left\{ 
% x \in (0,1) :  
% | Z_{j,k}^{\lambda}(x) - \frac{e^{-\lambda} \lambda^j}{j!} | 
% > \varepsilon 
% \right\}
% \end{align*}
% \end{defn}

\begin{defn}[Values $N_n$ and sets $E_n$]
For each $n\geq 1$ define
\begin{align*}
N_n:=&b^{2n}
\\
E_n:=  &(0,1)  \setminus  \bigcup_{N_n \leq k< N_{n+1}}   Bad_k
\end{align*}
where
\begin{align*}
Bad_k:=&\bigcup_{j\in J_k} \bigcup_{\lambda\in L_k }Bad(\lambda, k, j,1/k))
\\
J_k := &\{0 ,\ldots , b^k-1\}.
\\
L_k := &\{p/q: q \in \{1,\ldots, k\}, p/q < k\} 
\\
 Bad(\lambda, k, j,\varepsilon) := &
 \left\{ 
 x \in (0,1) :  
 | Z_{j,k}^{\lambda}(x) - \frac{e^{-\lambda} \lambda^j}{j!} |  > \varepsilon 
 \right\}
\end{align*}
\end{defn}
Observe that each set $Bad_k$ is a finite union of intervals with rational endpoints.
Also each set  $E_n$ is a finite union of intervals with rational endpoints.

\begin{fact}\label{fact1}
There is $n_0$ such that  for  every $n$ greater than $n_0$,
$
\mu(E_n)> 1-\frac{1}{N_n^2}.
$
\end{fact}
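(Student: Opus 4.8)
The plan is to bound the Lebesgue measure of the bad sets by a concentration inequality strong enough to survive the union bound defining $Bad_k$, and then to sum over the range $N_n \le k < N_{n+1}$. Since
\[
\mu(E_n) = 1 - \mu\Big(\bigcup_{N_n \le k < N_{n+1}} Bad_k\Big),
\]
it suffices to show $\sum_{k \ge N_n} \mu(Bad_k) < N_n^{-2}$ for all large $n$. Because $k \ge N_n \to \infty$, every ``for all large enough $k$'' threshold below is met once $N_n$ is large, so $n_0$ can be chosen to absorb all of them at once.

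First I would estimate $\mu(Bad(\lambda, k, j, 1/k))$ for a single triple $(\lambda, k, j)$, splitting the target error into the deterministic gap $|\mathbb{E}[Z^\lambda_{j,k}] - e^{-\lambda}\lambda^j/j!|$ and the random fluctuation $|Z^\lambda_{j,k} - \mathbb{E}[Z^\lambda_{j,k}]|$. For the deterministic gap, observe that $\mathbb{E}[Z^\lambda_{j,k}(x)]$ is the average over words $w$ of length $k$ of the probability that $w$ occurs exactly $j$ times in a uniformly random prefix of length $m = \lfloor \lambda b^k + k\rfloor$; the count of occurrences of a fixed $w$ is a sum of $m-k+1 \approx \lambda b^k$ indicators, each of probability $b^{-k}$, with dependency windows of width $k$. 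A Chen--Stein Poisson approximation, as underlies the metric theorem of~\cite{weiss2020,ABM}, gives total variation distance to $\mathrm{Pois}(\lambda)$ of order $\lambda k b^{-k}$. Since $\lambda < k$ on $L_k$ this is $O(k^2 b^{-k})$, hence $|\mathbb{E}[Z^\lambda_{j,k}] - e^{-\lambda}\lambda^j/j!| < \tfrac{1}{2k}$ for all large $k$, uniformly in $j \in J_k$ and $\lambda \in L_k$.

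The heart of the proof is the fluctuation bound, and the key point is that a second-moment (Chebyshev) estimate does \emph{not} suffice: it yields only $\mu(Bad) = O(k^2 b^{-k})$, and since $|J_k| = b^k$ the union bound over $j$ would fail to close. Instead I would invoke McDiarmid's bounded-difference inequality. As a function of the first $m \approx \lambda b^k$ digits, $Z^\lambda_{j,k}$ is Lipschitz with constant $O(k/b^k)$, because changing one digit alters the occurrence count of at most $O(k)$ words and so changes $|\{w : w \text{ occurs } j \text{ times}\}|$ by $O(k)$. Thus $\sum_i c_i^2 = O(\lambda k^2 b^{-k}) = O(k^3 b^{-k})$, and with $t = \tfrac{1}{2k}$ McDiarmid gives $\mu(|Z^\lambda_{j,k} - \mathbb{E}[Z^\lambda_{j,k}]| \ge \tfrac{1}{2k}) \le 2\exp(-c\, b^k/k^5)$. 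Combined with the deterministic gap, $\mu(Bad(\lambda, k, j, 1/k)) \le 2\exp(-c\, b^k/k^5)$ for all large $k$.

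Finally I would run the union bounds. Since $|J_k| = b^k$ and $|L_k| \le k^3$, we get $\mu(Bad_k) \le 2\, b^k k^3 \exp(-c\, b^k/k^5)$, and the doubly-super-polynomial exponent dominates the prefactor, so for large $k$ this is at most $\exp(-c'\, b^k/k^5)$. The resulting terms decay so fast that $\sum_{k \ge N_n} \mu(Bad_k) \le 2\exp(-c'\, b^{N_n}/N_n^5)$, dominated by its first term. Because $N_n = b^{2n}$, we have $b^{N_n} = b^{b^{2n}}$, which dwarfs $N_n^5$ and indeed any polynomial in $N_n$, so this bound is far below $N_n^{-2} = b^{-4n}$ once $n$ exceeds some $n_0$. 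The main obstacle is securing the sub-Gaussian tail with exponent of order $b^k$ rather than mere polynomial decay, since only that beats the $b^k$-fold union over $j \in J_k$; the bounded-difference inequality supplies exactly this, and the remaining estimates are routine.
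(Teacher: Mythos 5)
Your proposal is correct and follows essentially the same route as the paper: a per-triple exponential concentration bound of the form $\mu(Bad(\lambda,k,j,1/k)) \le 2e^{-cb^k/k^5}$, a union bound over $j\in J_k$ and $\lambda\in L_k$ giving $\mu(Bad_k)\le 2b^kk^3e^{-cb^k/k^5}$, and then summation over $N_n\le k<N_{n+1}$ with $N_n=b^{2n}$. The only difference is that the paper simply cites \cite[Proof of Theorem 2]{ABM} for the per-triple bound, whereas you sketch a self-contained derivation via Chen--Stein plus McDiarmid; your observation that Chebyshev would not survive the $b^k$-fold union over $j$ correctly identifies why the exponential tail is indispensable.
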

\begin{proof}
By \cite[Proof of Theorem 2]{ABM}\label{lemma:bad}
there is $k_0$ such that for every $k\geq k_0$, for every $j\geq 0$,
\begin{align*}
\mu(Bad(\lambda, k, j,1/k) )&<2  e^{-\frac{ b^k}{2\lambda  k^4}} 
\end{align*}
and
\[
\mu(Bad_k) = \mu
\Big(
\bigcup_{j\in J_{k}}\bigcup_{\lambda\in L_{k}} Bad(\lambda,k,j,1/k)\Big)< 2 b^{k} k^3 e^{-b^{k}/ (2k^5)}. 
\]
Recall  $N_n=1/b^{2n}$. 
Let $n_0$ be the least integer greater than or equal to $k_0$ such that for every~$n\geq n_0$,

\begin{align*}
\mu(Bad_{N_n})&<\frac{1}{2 N_n^2}
\end{align*}
and
\begin{align*}
\mu\Big(
\bigcup_{N_n \leq k< N_{n+1}}Bad_k
\Big)&< 2 \mu(Bad_{N_n}).
\end{align*}
Since 
$ E_n=(0,1)  \setminus  \bigcup_{N_n \leq k< N_{n+1}}   Bad_k$,
 we have 
 \[
 \mu(E_n)\geq  1- 2 \mu(Bad_{{N_n}}).\]
Hence we obtain the wanted inequality,
\[
\mu(E_n) >1 - \frac{1}{N_n^2}.
\]
\end{proof}

Fact \ref{fact1} ensures that the  set $\bigcap_{n\geq n_0} E_n$ has positive measure.
Let see that $\bigcap_{n\geq n_0} E_n$ consists  entirely of Poisson generic real numbers for base~$b$.
Suppose that $x$ is not Poisson generic for base $b$. By Lemma \ref{lemma:rat} $x$ is not $\lambda$-Poisson generic in base $b$ for some positive rational $\lambda$.
Then, there is  a positive $\varepsilon$ and a non-negative integer $j$ such that 
for infinitely many $n$s, 
\[
\Big| Z_{j,n}^{\lambda}(x) - \frac{e^{-\lambda} \lambda^j}{j!} \Big| > \varepsilon.
\]
Let  $n_1=n_1(\lambda,\varepsilon,j)$ be the smallest 
such that $\lambda\in L_{n_1}, j\in J_{n_1}, \varepsilon\geq 1/n_1$.
Since sets $J_n$ and $L_n$ are subset increasing in $n$, for every $n\geq n_1$ we have $\lambda\in L_n$ and $j\in J_n$.
And since $ \varepsilon> 1/n_1$ we have  $\varepsilon>1/n$, for every $n>n_1$.
Then, for  infinitely many values of $n$ greater than or equal to $n_1$, $x\in Bad_n$.
Hence, for infinitely many values of $n$,  $x\not\in E_n$, and thus $x\not\in \bigcap_{n\geq n_0} E_n$.

The  following algorithm  is an adaptation of Turing's algorithm for computing an absolutely normal number (see \cite{BFP}).  We modified it to obtain a real that is Poisson generic  in base~$b$.

\begin{algorithm}\label{algorithm}\em
Let $n_0$ be determined by Fact \ref{fact1}.
Let $I_{n_0}:=(0,1)$.
\\At each step  $n>n_0$,
divide $I_{n-1}$ in $b$ equal parts $I^0_{n-1}, I^1_{n-1}, \ldots , I^{b-1}_{n-1}$.
\\Let $v$ be the smallest in $\{0, .., (b-1)\}$ such that 
$\displaystyle\mu(I^v_{n-1} \cap E_{n}) > \frac{1}{N_n}  $.
\\$I_n:=I^v_{n-1}$.
\\
The  $n$-digit in the base-$b$ expansion of $x$ is  the digit  $v$.
\end{algorithm}

\begin{rem}Observe that the number $x$ computed by the algorithm ensures that for each $n\geq n_0$, $x\in I_n\cap E_n$. Since the intervals $I_n$ and rested, we have 
\[
x\in I_n\cap \Big(
\bigcap_{n_0\leq m\leq n}E_m\Big),
\]
 where  $E_n=(0,1)\setminus \bigcup_{N_{n}\leq k\leq N_{n+1}} Bad_k$ with  $N_n={b^{2n}}$.
Thus, to define $x\res n$ the algorithm looks at all the possible continuations up to $x\res b^{N_{n+1}}$.
\end{rem}

We  prove that the number $x$ produced by the algorithm  is indeed Poisson generic for base~$b$.
The algorithm defines a sequence of intervals $(I_n)_{n\geq n_0}$ such that $I_n=\left(\frac{a}{b^n}, \frac{a+1}{b^n}\right)$ for some $a\in\{0, \ldots, b^n-1
\}$, $I_{n+1}\subseteq I_n$ and $\mu(I_n)=b^{-n}$.
The number $x$ defined is the unique element in $\displaystyle\bigcap_{n\geq n_0}I_n$.
We first prove  that for every  $n\geq n_0$, 
\[
\mu\Big( I_n \cap \bigcap_{i=n_0}^n E_i\Big) >0.
\]
To show this we prove by induction on $n$, 
\[
\mu\Big(I_n \cap \bigcap_{i=n_0}^n E_n \Big)>\frac{1}{N_n}.
\]

{\em Base case.} For $n_0$ it  is immediate because   $I_{n_0}=(0,1)$, so $\mu(I_{n_0})=1$ and 
\[
\mu(E_{n_0})>1-\frac{1}{N_{n_0}^2}>\frac{1}{N_{n_0}}.
\]
{\em Inductive case}. Assume the inductive hypothesis
\[
\mu\Big( I_{n} \cap \bigcap_{i=n_0}^n E_i \Big) >\frac{1}{N_n}.  
\]
Let's see it  holds  for $n+1$. Using the inductive hypothesis and Fact \ref{fact1}, we have
\begin{align*}
\mu\Big(  I_{n} \cap \bigcap_{i=n_0}^{n+1} E_i \Big)
&=
\mu\Big( \Big( I_{n} \cap \bigcap_{i=n_0}^n E_i \Big)\cap E_{n+1}\Big)
\\
&>
\mu  \Big(I_n \cap \bigcap_{i=n_0}^n E_i \Big)  -  \mu((0,1)- E_{n+1})
\\
&>  \frac{1}{N_n}  - \frac{1}{{N _n}^2}
\\
%&> 1/N_n -  2 b^{N_{n+1}} N_{n+1}^3 e^{-b^{N_{n+1}} / 2(N_{n+1})^5}
%\\
&> \frac{b}{N_{n+1}}.
\end{align*}
Then, it is impossible that for each of the  $b$ possible $v$s, $v=0, v=1, \ldots, v=(b-1)$,
\[
\mu\Big(I^v_{n} \cap \bigcap_{i=1}^{n+1} E_i \Big) \leq \frac{1}{N_{n+1}}.
\]
So,  there is at least one $v\in\{0, \ldots, (b-1)\}$ such that 
\[
\mu\Big(I^v_{n} \cap \bigcap_{i=n_0}^{n+1} E_i \Big) >\frac{1}{N_{n+1}}.
\]
Since the algorithm  sets $I_{n+1}$  to be  the leftmost $ I^v_{n}$ with this property,
we have
\[
I_{n+1} \cap \bigcap_{i=n_0}^{n+1} E_i > \frac{1}{N_{n+1}}.
\]
We conclude that $x\in \bigcap_{n\geq n_0} E_n$.
%This implies that  $x\not\in \bigcup_{n\geq 0}Bad_n$.
%By Lemma~\ref{lemma:out} 
So $x$ is $\lambda$-Poisson generic in base $b$ for all positive 
rational~$\lambda$.
Finally, to conclude that 
 $x$ is $\lambda$-Poisson 
generic in base $b$ for every positive real $\lambda$, 
hence Poisson generic in base $b$ we need the following lemma.

\begin{lem}[adapted from~\cite{ABM}] \label{lemma:rat}
Let integer $b\geq 2$.
If $x\in (0,1)$ is $\lambda$-Poisson generic  in base $b$ for all positive rational
 $\lambda$ then $x$ is Poisson generic in base $b$.
\end{lem}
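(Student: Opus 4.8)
The plan is to pass from the rational parameters to an arbitrary real $\lambda$ by a monotonicity-and-squeezing argument, approximating $\lambda$ from above and below by rationals. The crucial observation is that, although the individual counts $Z^\lambda_{j,k}(x)$ are \emph{not} monotone in $\lambda$, the cumulative tail counts are. Fix $k$ and, for a word $w\in\{0,\ldots,b-1\}^k$, let $c_w(\lambda,k)$ denote the number of occurrences of $w$ in $x\res \lambda b^k+k$. Since the window length $\lfloor \lambda b^k+k\rfloor$ is non-decreasing in $\lambda$, the initial segments are nested, and so $c_w(\lambda,k)$ is non-decreasing in $\lambda$ for every fixed $k$ and $w$. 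Hence, setting
\[
Y^\lambda_{\geq m,k}(x):=\frac{1}{b^k}\,\big|\{\,w : c_w(\lambda,k)\geq m\,\}\big|,
\]
we obtain $Y^{\mu}_{\geq m,k}(x)\leq Y^{\lambda}_{\geq m,k}(x)\leq Y^{\nu}_{\geq m,k}(x)$ whenever $\mu\leq\lambda\leq\nu$, for every $k$.

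Next I would translate the hypothesis into convergence of these tails along rational parameters. Write $P_j(\lambda)=e^{-\lambda}\lambda^j/j!$ and let $Q_{\geq m}(\lambda)=\sum_{j\geq m}P_j(\lambda)$ be the Poisson tail. Since every word has \emph{some} count, $\sum_{j\geq 0}Z^\mu_{j,k}(x)=1$, and therefore $Y^\mu_{\geq m,k}(x)=1-\sum_{j=0}^{m-1}Z^\mu_{j,k}(x)$. By hypothesis $x$ is $\mu$-Poisson generic for every rational $\mu>0$, so each $Z^\mu_{j,k}(x)\to P_j(\mu)$ as $k\to\infty$; as the sum over $j<m$ is finite, this gives $Y^\mu_{\geq m,k}(x)\to Q_{\geq m}(\mu)$ for every rational $\mu>0$ and every $m\geq 1$ (the case $m=0$ being trivial, both sides equal $1$).

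Now fix an arbitrary real $\lambda>0$, an integer $j\geq 0$, and rationals $\mu<\lambda<\nu$. The monotonicity sandwich together with the rational convergence yields, upon taking $\liminf$ and $\limsup$ in $k$,
\[
Q_{\geq m}(\mu)\leq \liminf_{k} Y^\lambda_{\geq m,k}(x)\leq \limsup_{k} Y^\lambda_{\geq m,k}(x)\leq Q_{\geq m}(\nu).
\]
Since $\mu\mapsto Q_{\geq m}(\mu)$ is continuous, letting $\mu\uparrow\lambda$ and $\nu\downarrow\lambda$ through rationals forces $\lim_{k}Y^\lambda_{\geq m,k}(x)=Q_{\geq m}(\lambda)$ for every $m$. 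Finally, writing $Z^\lambda_{j,k}(x)=Y^\lambda_{\geq j,k}(x)-Y^\lambda_{\geq j+1,k}(x)$ gives $\lim_{k}Z^\lambda_{j,k}(x)=Q_{\geq j}(\lambda)-Q_{\geq j+1}(\lambda)=P_j(\lambda)$, so $x$ is $\lambda$-Poisson generic, as required.

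The main obstacle — and the reason for passing to cumulative counts rather than working with $Z^\lambda_{j,k}$ directly — is precisely that the individual counts are not monotone in $\lambda$, so no direct squeeze is available at the level of $Z$; the device of sandwiching the monotone tails $Y_{\geq m}$ and then reconstructing $Z$ as a difference of two convergent tails is what makes the argument go through. The only other point requiring care is the bookkeeping on the window length $\lfloor \lambda b^k+k\rfloor$, to confirm that $\mu\leq\lambda\leq\nu$ indeed produces nested initial segments and hence the claimed inequalities between occurrence counts for every $k$.
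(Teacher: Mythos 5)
Your proof is correct, but it takes a genuinely different route from the one in the paper. The paper (following \cite{ABM}) views the count of a word $v$ in the window $(0,\lambda]$ as a point process $M^x_k$ on the word space with uniform measure, bounds the total variation distance between the empirical count distributions at two parameters by $\lambda'-\lambda+\mathrm{O}(b^{-k})$ (only the words meeting the $\approx(\lambda'-\lambda)b^k$ extra positions can change their count), and combines this with the continuity of $\lambda\mapsto \mathrm{Po}(\lambda)$ in total variation to pass from rational to real $\lambda$ by a triangle inequality. You instead exploit only the \emph{one-sided} information that nested windows give monotone occurrence counts, sandwich the empirical complementary distribution functions $Y^\lambda_{\geq m,k}$ between rational parameters, invoke the hypothesis and the continuity of the Poisson tail, and recover $Z^\lambda_{j,k}$ as a difference of two convergent tails. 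Your argument is softer and more elementary: it needs no quantitative control on how far the empirical distribution moves when $\lambda$ is perturbed, only the direction of the movement, and it avoids the point-process and total-variation machinery entirely. What the paper's approach buys in exchange is an explicit, $j$-uniform modulus $|Z^{\lambda'}_{j,k}-Z^{\lambda}_{j,k}|\leq \lambda'-\lambda+\mathrm{O}(b^{-k})$, which is reusable elsewhere and matches the estimates of the source it adapts. Both proofs ultimately rest on the same observation that nearby values of $\lambda$ determine windows differing in few positions; the paper uses the smallness of that difference, you use only its sign. The one step in your write-up that deserves the explicit sentence you gave it is the nesting of the windows $\lfloor\mu b^k+k\rfloor\leq\lfloor\lambda b^k+k\rfloor\leq\lfloor\nu b^k+k\rfloor$, which is immediate; everything else checks out.
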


\begin{proof}
For each $x\in (0,1)$
and for each $k\in \mathbb N$, 
on  the space of words of  length $k$
with uniform measure %product measure $\mu^k$,
define the integer-valued random measure  
$M^x_k =M_k^{x}(  v)$ on the  real half-line $\R^+=[0,+\infty)$ by setting for all Borel sets $S\subseteq \R^+$,
\[
M_k^{x}(S)(  v) := \sum_{p \in \N \cap b^k S} I_p(x,  v),
\]
where $I_p$ is the indicator function that $v$ occurs in $x$ at position $p$ and 
$ \N\cap b^k S$  denotes the set of integer values in  
$\{ b^k s: s\in S\}$.  Then, $M^x_k(\cdot)$ is a point process on $\R^+$.
The function~$Z_{j,k}^{\lambda}(x)$can be formulated  in terms of 
of $M^{x}_k(S)$ for the sets  $S=(0,\lambda]$, as follows:
\begin{align*}
Z_{j,k}^{\lambda}(x)
&=\frac{1}{b^k}
\#\{   v\in \{0,\ldots, (b-1)\}^k: M^{x}_k((0,\lambda])(  v)=j)\}.
\end{align*}
Observe that for every pair of positive reals  $\lambda,\lambda'$, with~$\lambda < \lambda'$, 
\[ 
M_k^x((0,\lambda'])(  v) 
- M_k^x((0,\lambda])(  v) = 
\sum_{p \in \N \cap b^k [\lambda, \lambda')} 
I_p(x,  v ).
\] 

The classical total variation distance $d_{TV}$ between two probability measures $P$ and $Q$ on a 
$\sigma-$algebra~$\mathcal{F}$ is defined via
\[
d_{TV}(P,Q):=\sup_{ A\in \mathcal{F}}\left|P(A)-Q(A)\right|.
\]
For a random variable $X$ taking values in $\R$, 
the distribution of~$X$ 
is the probability measure~$\mu_X$ on~$\R$ defined as the push-forward 
of the probability measure on the sample space of~$X$.
 The total variation distance between two random variables~$X$ and $Y$ is simply 
$d_{TV}(X,Y) = d_{TV}(\mu_X, \mu_Y).$
Hence, the total distance variation 
\[
d_{TV}(M_k^x((0,\lambda']), M_k^x((0,\lambda]))
\leq \frac{1}{b^k}\#(\N \cap b^k [\lambda, \lambda')  )
= \lambda' - \lambda + {\rm O}(b^{-k}).
\] 
Also observe that  $d_{TV}(Po({\lambda'}), Po({\lambda})) \to 0$ 
as $\lambda \to \lambda'$.
From these two observations and the fact that the rational numbers are a dense subset of the real
numbers we conclude that 
being $\lambda$-Poisson
generic for every positive rational $\lambda$  implies Poisson generic.
\end{proof}

The proofs of Theorems~\ref{light} and~\ref{d2light} are very similar to those of Theorems~\ref{bold} and~\ref{d2bold}.
However we now include what is needed to prove the lightface results.
We now start we a computable Poisson generic number in base $b$ that we obtain with the Algorithm above, and we computably determine the sequence of values $(k_i)_{i\geq 1}$ using the input sequence $z\in\omega^\omega$.

\begin{proof}[Proof of Theorem~\ref{light}]

Let $\cC=\{ z \in \omega^\omega\colon \lim_{i\to\infty} z(i)=\infty\}$. 
So, $\cC$ is $\Pi^0_3$-complete. 
We define a computable map $f \colon \ww\to (0,1)$ which 
reduces $\cC$ to ${\mathcal P}_b$. Fix $z \in\ww$. 
At step $i$, let  $k_i$ be  the least  integer  
such that  
$k_i>k_{i-1}$, 
and $k_i > z(i)$.
Fix $k_0=0$. For $i>0$, we define $f(z)\res [b^{k_{i-1}}, b^{k_i})$ as follows. 
Let 
\begin{align*}
B_i&:=[b^{k_{i-1}}, b^{k_i})
\\ %and let 
B'_i& :=\Big[b^{k_{i-1}}, \Big(1-\frac{1}{z(i)}\Big)b^{k_i} \Big).
\end{align*}
We set 
\[
f(z)\res B'_i:=x\res B'_i,  \text{ and } 
f(z)\res B_i\sm B'_i:=0. 
\]

First suppose $z \notin \cC$, and fix $\ell \in \omega$ such that for infinitely many $i$ 
we have $z(i)=\ell$. Consider step $i$ in the construction of $f(z)$ for such an $i$. 
For any $\epsilon >0$, if $i$ is large enough then the number of words $w$ of length 
$k_i$ which occur in $x\res [1, (1-\frac{1}{z(i)})b^{k_i}]$
is at most 
\[
b^{k_i} (1-e^{-(1-\frac{1}{z(i)})}+\epsilon).
\]
Then, the number $Z_i$ of words $w$ of length $k_i$ 
which occur in $f(z)\res b^{k_i}$ is at most 
\[
b^{k_i} (1-e^{-(1-\frac{1}{z(i)})}+\epsilon). 
\]
So, 
\[
\frac{1}{b^{k_i}} Z_i\leq  (1-e^{-(1-\frac{1}{\ell})}+2\epsilon) .
\]
% if $i$ is large enough using the fact that the $k_i$ grow sufficiently fast. 
On the other hand,
the Poisson estimate for the proportion of words of length $k_i$ occurring in an initial segment of length $b^{k_i}$ is $1-{1}/{e}$. Since $\ell$ is fixed, as $i$ 
gets large we have a contradiction. So, $f(z)$ is not $1$-Poisson generic n base $b$.

Next suppose that $z \in \cC$. We show that $f(z)$ is Poisson generic in base $b$. 
Fix a positive  rational $\lambda$ and   $\epsilon >0$.
Consider any  $k\in \omega$, large enough so that the following  holds:

Let  $i$  be such that  $k_{i-1}\leq  k  <k_i$,
\begin{itemize}
\item  if     $\lambda= \frac{p}{q}$ then $k_{i-1}\geq   q$,

\item  $k_i > \frac{1}{\epsilon}$,

%\log_2 \left(
%\frac{1}{\epsilon}
%\right)$ 
%\frac{1}{k_{i-1}} < \varepsilon$,

\item  
for all $s \geq i-1$ we have $\frac{1}{z(s)} <\epsilon$.

%\item   $b^{k_i} (1-\frac{1}{z(i)}) > \lambda b^k $
\end{itemize}

We show that for  any such $k$, and for every  non negative $j$ less than $b^k$,
\linebreak
 $|Z^\lambda_{j,k}(f(z))- e^{-\lambda}\frac{\lambda^j}{j!}|<\epsilon$. 
First consider the case $\lambda \leq 1$. Fix $j$.
We have that 
\begin{equation*}
\begin{split}
\Big|\frac{1}{b^k} Z^\lambda_{j,k}(f(z))- \frac{1}{b^k} Z^\lambda_{j,k} (x)\Big|
& \leq \frac{1}{b^k} \Big( b^{k_{i-1}} \frac{1}{z(i-1)} + 2k + \sum_{m<i-1} b^{k_m} \Big)
\\ & 
\leq \frac{1}{z(i-1)} +\epsilon 
\\ &
\leq 2\epsilon
\end{split}
\end{equation*}
for $i$ large enough.
 We have used here the fact that  $|Z^\lambda_{j,k}(f(z))- Z^\lambda_{j,k}(x)|$
is at most the number of words of length $k$ which appear in one of $f(z)\res b^k$, $x\res b^k$
but not the other. Such a word must overlap the block of $0$s in $f(z)\res [b^{k_{i-1}}(1-\frac{1}{z(i-1)}),
b^{k_{i-1}})$, or else overlap $[1,b^{k_{i-2}}]$, which gives the above estimate.

Consider now the case $\lambda >1$. If $\lambda b^k < b^{k_i} (1-\frac{1}{z(i)})$, then the same estimate above works. 
So, suppose $b^k \geq \frac{1}{\lambda} b^{k_i} (1-\frac{1}{z(i)})$. 
In this case we also count the number of words $w$ of length $k$ 
which might overlap the block of $0$s in $f(z)\res [b^{k_i}(1-\frac{1}{z(i)}), b^{k_i}]$. We then get 
\begin{equation*}
\begin{split}
\Big|\frac{1}{b^k} Z^\lambda_{j,k}(f(z))- \frac{1}{b^k} Z^\lambda_{j,k} (x)\Big|
& \leq \frac{1}{b^k} \Big( b^{k_{i-1}} \frac{1}{z(i-1)} + b^{k_i} \frac{1}{z(i)} +3k + \sum_{s<i-1} b^{k_s} \Big)
\\ & 
\leq \frac{1}{z(i-1)} + \frac{b^{k_i}}{b^k} \frac{1}{z(i)} +\epsilon 
\\ &
\leq \frac{1}{z(i-1)}+ \lambda \frac{1}{1-\frac{1}{z(i)}} \frac{1}{z(i)}+\epsilon
\\ &
\leq 2\epsilon
\end{split}
\end{equation*}
if $i$ is sufficiently large, since $\lambda$ is fixed and $z(i)\to \infty$.  
\end{proof}

We can now prove the $D_2$-$\Pi^0_3$-completeness of the difference set ${\mathcal N}_b\setminus {\mathcal P}_b$.

\begin{proof}[Proof of Theorem~\ref{d2light}]
The proof  is exactly as that of Theorem~\ref{d2bold} except that  we start with a computable 
real  $x$  and we determine the  
sequence $(k_i)_{i\geq 1}$  using the input sequence $z\in\omega^\omega$.
Let $x$ be the number obtained by  the Algorithm.

Let 
${C}:=\{ z \in \omega^\omega \colon z(2n)\to \infty\}$
and 
${D}:=\{ z \in \ww\colon z(2n+1)\to \infty\}$.
We  assume without loss of generality that all $z(i)$ are powers of $2$. 
We define a computable map $f \colon \ww\to (0,1)$ which reduces 
${ C}\sm { D}$ to $\sN_b\sm \sP_b$.
Fix $z \in\ww$. 
At step $i$, let  $k_i$ be  the least  power of $2$
such that  
 $k_i>k_{i-1}$, 
 and $k_i > z(i)$.
We define $f$ so that for $z \in \ww$, the even digits $z(2i)$ will control whether
$f(z)\in \sN_b$ and the odd digits $z(2i+1)$  control whether $f(z)\in \sP_b$. 
When we wish to violate Poisson genericity, we  do so for $\lambda=1$ and $j=0$. 

As in the proof of Theorem~\ref{d2bold}, 
at step $i$ we define $f(z)\res B_i$, where $B_i=[b^{k_{i-1}}, b^{k_i})$. 
Let 
\begin{align*}
f(z)\res B^1_i&:=x\res B^1_i\\
f(z)\res B^2_i&:=0\\
f(z)\res B^3_i&:= x\res \Big[b^{k_{i-1}}, b^{k_{i-1}}+\frac{1}{z(2i+1)} b^{k_i}\Big)
\end{align*}
where
\begin{align*}
B^1_i&:=\Big[b^{k_{i-1}}, \big(1-\frac{1}{z(2i)}-\frac{1}{z(2i+1)}\big) b^{k_i}\Big)
\\ 
B^2_i&:=\Big[\big(1-\frac{1}{z(2i)}-\frac{1}{z(2i+1)}\big) b^{k_i}, \big(1-\frac{1}{z(2i+1)})b^{k_i}\big)\Big)
\\
B^3_i&:=\Big[ (1-\frac{1}{z(2i+1)})b^{k_i}), b^{k_i}\Big).
\end{align*}
Notice that  $|B^2_i|= \frac{1}{z(2i)} b^{k_i}$, and $|B^3_i|=\frac{1}{z(2i+1)} b^{k_i}$.

We show that $f$ is a reduction from 
$C\sm D$ to $\sN_b\sm \sP_b$.
First assume $z \notin C$, that is $z(2i)$ does not tend to infinity when $i$ goes to infinity. 
Fix $\ell$
such that $z(2i)=\ell$ for infinitely many~$i$. We easily have that $f(z)\notin \sN_b$. 
For example, if the digit $0$ occurs with approximately the right frequency $\frac{1}{b}$
in $f(z)\res [0, b^{k_{i-1}}+|B^1_i|)=[0, b^{k_i}(1-\frac{1}{z(2i)}-\frac{1}{z(2i+1)}) )$,
then $0$ will occur with too large a frequency in 
\[
f(z)\res \Big[1,b^{k_{i-1}}+|B^1_i|+|B^2_i|)=
[0, b^{k_{i-1}}+|B^1_i|+ \frac{1}{\ell} b^{k_i}\Big).
\]
We use here that $\frac{1}{b^{k_i}}\sum_{k<i} b_k \to 0$. 
 This is because $f(z)\res B^i_2=0$
and $|B^i_2|=\frac{1}{\ell} b^{k_i}$ for such $i$.

Now assume that $z \in C$,
so $\frac{1}{b^{k_i}} |B^2_i|= \frac{1}{z(2i)}\to 0$.
Then,  we have  $f(z) \in \sN_b$. 
This follows from 
% Lemma~\ref{pl} and the fact that we may assume that $\lim_{i\to\infty} \frac{g(i-1)}{b^{k_{i-1}}}=0$.
the the definition of $f$ and the fact that by that $x$ is Borel normal to base $b$, see \cite[Theorem 2]{BSH}.

Assume first that $z \in D$, so $z \notin C\sm D$. We show $f(z) \in \sP_b$, and so 
$f(z)\notin \sN_b \sm \sP_b$. Since we are assuming $z \in C$ also, we have 
$\lim_{i \to \infty} z(i)=\infty$. So, $\lim_{i \to \infty} \frac{1}{b^{k_i}} (|B^2_i|+|B^3_i|)=0$.
It then follows exactly as  in the proof of 
Theorem~\ref{light} that $f(z)\in \sP_b$. 

Assume next that $z \notin D$ (but $z \in C$ still). We show that $f(z)\notin \sP_b$,
which shows $f(z) \in \sN_b \sm \sP_b$. Fix $m$ so that for infinitely many $i$
we have $z(2i+1)=m$,
and  $m$ is of the form $m=2^\ell$.  Recall $\frac{1}{b^{k_i}} |B^3_i|= \frac{1}{z(2i+1)}
=\frac{1}{2^\ell}$ for such $i$. We restrict our attention to this set of $i$ in the following
argument: 
If $f(z)$ were Poisson generic, then from Lemma~\ref{fl} we would have that 
for large enough $i$ in our set that 
\[
\frac{1}{b^{k_i}} |H_i|= (1-e^{-\alpha})(e^{-(1-\alpha)}),
\]
where $H_i$ is the set of words of length $k_i$ which occur in $f(z)$
with a starting position in $[(1-\alpha)b^{k_i}, b^{k_i})$, but do not occur in $x$ with a starting position in 
$[b^{k_{i-1}}, (1-\alpha)b^{k_i})$. However, by the construction of $f(z)$ we have that every 
word which occurs in $[(1-\alpha)b^{k_i}, b^{k_i})$ also occurs in $[b^{k_{i-1}}, (1-\alpha)b^{k_i})$,
and so $|H_i|=0$. This completes the proof of Theorem~\ref{d2light}.
\end{proof}
\bigskip
\bigskip

\noindent
{\bf Acknowledgements.} V.~Becher is supported by grant PICT 2018-2315 of Agencia Nacional de Promoción Científica y Tecnológica de Argentina. S.~Jackson is supported by NSF grant DMS-1800323. W.~Mance is  supported by grant 2019/34/E/ST1/00082 for the project ``Set theoretic methods in dynamics and number theory,'' NCN (The National Science Centre of Poland). D.~Kwietniak is supported by NCN (the National Science Centre, Poland) Preludium Bis project no. 2019/35/O/ST1/02266.
\bigskip

\bibliographystyle{plain}

%\bibliography{main}
\bigskip
\bigskip

\footnotesize 

\noindent
Ver\'onica Becher \\
 Departamento de  Computaci\'on, Facultad de Ciencias Exactas y Naturales \& ICC  \\
 Universidad de Buenos Aires \&  CONICET  Argentina\\  
 {\tt  vbecher@dc.uba.ar}
\medskip
\medskip

\noindent
Stephen Jackson 
\\
Department of Mathematics\\ 
University of North Texas, General Academics Building 435\\
1155 Union Circle,  \#311430, Denton, TX 76203-5017, USA
\\
{\tt Stephen.Jackson@unt.edu}
\medskip
\medskip

\noindent
Dominik  Kwietniak\\
Faculty of Mathematics and Computer Science\\
Jagiellonian University in Krak\'ow, ul. {\L}ojasiewicza 6, 30-348 Krak\'ow, Poland\\
{\tt dominik.kwietniak@uj.edu.pl }
\medskip
\medskip

\noindent
Bill Mance 
\\
Faculty of Mathematics and Computer Science\\ 
Adam Mickiewicz University, ul. Umultowska 87, 61-614 Pozna\'{n}, Poland
\\{\tt william.mance@amu.edu.pl}

\end{document}